\newcommand{\chapeau}{{\rlap{\smash{\hbox{\lower4pt\hbox{\hskip1pt$\widehat{\phantom{u}}$}}}}}\mbox{ }}
\DeclareSymbolFont{cyrletters}{OT2}{wncyr}{m}{n}
\DeclareMathSymbol{\sha}{\mathalpha}{cyrletters}{"58}
 \newtheorem{thm}{Theorem}[section]
 \newtheorem{thm'}[thm]{Theorem'}
 \newtheorem*{pb*}{\textit{Open question}}
 \newtheorem*{cor*}{\textit{Corollary}}
 \newtheorem{cor}[thm]{Corollary}
 \newtheorem{lem}[thm]{Lemma}
 \theoremstyle{definition}
 \newtheorem{defn}[thm]{Definition}
 \theoremstyle{definition}
 \newtheorem*{ter}{Terminology}
 \theoremstyle{remark}
 \theoremstyle{remark}
 \newtheorem{rem}[thm]{Remark}
 \theoremstyle{remark}
 \numberwithin{equation}{subsection}
 \renewcommand{\P}{\mathbb{P}}
 \newcommand{\Q}{\mathbb{Q}}
 \newcommand{\Z}{\mathbb{Z}}
 \newcommand{\G}{\mathbb{G}}
 \newcommand{\K}{\mathcal{K}}
 \newcommand{\Hil}{\mathsf{Hil}}
 \newcommand{\Gal}{\textup{Gal}}
 \newcommand{\coker}{\textup{Coker}}
 \newcommand{\cores}{\textup{cores}}
 \renewcommand{\ker}{\textup{Ker}}
 \newcommand{\Br}{\textup{Br}}
 \newcommand{\Pic}{\textup{Pic}}
 \newcommand{\CH}{\textup{CH}}
 \renewcommand{\H}{\textup{H}}
 \newcommand{\Hom}{\textup{Hom}}
 \newcommand{\Spec}{\textup{Spec}}
 \newcommand{\inv}{\textup{inv}}
 \newcommand{\E}{\textup{E}}
\begin{document}

\title[]
{Local-global principle for 0-cycles on fibrations over rationally connected bases}

\author{ Yongqi LIANG  }

\address{Yongqi LIANG
\newline B\^atiment Sophie Germain,
\newline Universit\'e Paris Diderot - Paris 7,
\newline Institut de Math\'ematiques de Jussieu -  Paris Rive Gauche,
 \newline  75013 Paris,\newline
 France}

\email{liangy@math.jussieu.fr}

\thanks{\textit{Key words} : zero-cycles,  weak approximation,
Brauer\textendash Manin obstruction}

\thanks{\textit{MSC 2010} : 11G35 (14G25, 14D10)}

\date{\today.}



\maketitle

\begin{abstract}
We study the Brauer\textendash Manin obstruction to the Hasse principle and to weak approximation for 0-cycles on algebraic varieties that possess a fibration structure. The exactness of the local-to-global sequence $(\E)$ of Chow groups of 0-cycles was known only for a fibration whose base is either a curve or the projective space. In the present paper, we prove the exactness of $(\E)$ for fibrations whose bases are Ch\^{a}telet surfaces or projective models of homogeneous spaces of connected linear algebraic groups with connected stabilizers. 
We require that either all fibres are split and most fibres satisfy weak approximation for 0-cycles, or the generic fibre has a 0-cycle of degree $1$ and $(\E)$ is exact for most fibres.
\end{abstract}

\tableofcontents

\section{Introduction}

\subsection{Background}
Let $k$ be a number field. In this paper, we consider algebraic varieties $X$ defined over $k$. It is conjectured by Colliot-Th\'el\`ene\textendash Sansuc and Kato\textendash Saito that the Hasse principle and weak approximation for 0-cycles on proper smooth varieties can be controlled by their Brauer groups. To be precise, we are interested in the exactness of the following complex which will be explained with more details in \S \ref{notationsection}
$$\varprojlim_{n}\CH_{0}(X)/n\to\prod_{v\in\Omega_{k}}\varprojlim_{n}\CH'_{0}(X_{k_v})/n\to\Hom(\Br X,\Q/\Z).\leqno(\E)$$

The exactness of $(\E)$ for $X=\Spec(k)$ (or $X=\P^{m}$) is ensured by global class field theory. The exactness of $(\E)$ for smooth projective curves was proved by  Saito \cite{Saito} and \cite{CT99HP0-cyc} under the assumption of the finiteness of Tate\textendash Shafarevich groups of their Jacobians.
Concerning higher dimensional varieties, known results are only available for several homogeneous spaces of linear algebraic groups and varieties possessing a fibration structure. Here we consider only fibrations $X\to B$. Existing results have dealt with fibrations whose base $B$ is
\begin{itemize}
\item[-] either a  curve $C$ (with finiteness of $\sha(\textup{Jac}_{C,k})$ assumed), 
\item[-] or the projective space $\P^{m}$  by induction reducing to $m=1$.
\end{itemize}
See the very recent paper of Harpaz and Wittenberg \cite{HarWit} for  a summary.

Concerning analogous questions for rational points, the fibration method allows one to deal with other bases besides curves and projective spaces, for example \cite[Thm. 4.3.1]{Harari} and \cite[Thm. 3]{HarariSMF}. However, the arguments cannot be extended directly to 0-cycles. The main difficulty was that on the base $B$ we did not know how to approximate topologically a family of effective local 0-cycles in good position by a single global closed point even if $B$ satisfies weak approximation for rational points (over any finite field extension). When $B$ is $\P^{1}$, this is trivial for rational points; and for 0-cycles this can be done by Salberger's device \cite{Salberger}.  More generally, if $B$ is a smooth projective curve whose jacobian has finite Tate\textendash Shafarevich group, this can be treated by Colliot-Th\'el\`ene's argument for 0-cycles, \cite{CT99}. We are trying to overcome this difficulty at least for geometrically rationally connected varieties.

\subsection{Our results}
In the author's Ph.D. dissertation defence, Per Salberger asked  whether one can prove results for fibrations whose  base is a Grassmannian. This work is a first try to answer this question, actually we prove much more general results which can certainly be applied when $B$ is a Grassmannian. Our main results are as follows, we refer to \S \ref{notationsection} for  relevant   terminology. We will give proofs in \S \ref{proofsection} of stronger and more detailed results \textemdash~ Theorems \ref{mainThm-geo-int} and \ref{mainThm-section}.

\begin{thm}[\textit{cf.} Theorem \ref{mainThm-geo-int}]\label{simpleThm-geo-int}
Let $f:X\to B$ be a dominant morphism between proper smooth geometrically rationally connected varieties defined over  a number field $k$. Assume that its geometric generic fibre is integral.

Suppose that
\begin{enumerate}
\item[1,]  the fibration $f$ is split in codimension  $1$ (Definition \ref{codim1split});
\item[2,]  for all finite extensions $K$ of $k$, all smooth fibres of $f_{K}:X_{K}\to B_{K}$ over $K$-rational points of $B_{K}$ verify weak approximation  for 0-cycles of degree $1$;
\item[3,]  for all finite extension $K$ of $k$, the Brauer\textendash Manin obstruction is the only obstruction to  weak approximation for $K$-rational points on $B_{K}$.
\end{enumerate}
Then the sequence $(\E)$ is exact for $X$.
\end{thm}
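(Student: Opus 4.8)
The plan is to reduce the exactness of $(\E)$ to a concrete finite-level approximation statement and then to run the fibration method, transferring the problem from $X$ down to its rationally connected base $B$.

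First I would recast the exactness of $(\E)$ in approximation terms. By the standard duality and inverse-limit formalism for 0-cycles (the finiteness reductions of Colliot-Th\'el\`ene--Swinnerton-Dyer and Wittenberg), it suffices to prove the following: given a family $(z_v)_v$ of local 0-cycles of a fixed degree $\delta$ on $X$ that is orthogonal to $\Br X$ under the Brauer--Manin pairing, then for every finite set $S$ of places and every integer $N>0$ there is a global 0-cycle $z$ on $X$ of degree $\delta$ whose class is congruent to $z_v$ modulo $N$ in $\CH_0(X_{k_v})/N$ for all $v\in S$. The aim is to produce such a $z$ supported on a single smooth fibre.

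Second, I would push everything down to $B$. Writing $f_*$ for proper push-forward, the cycles $f_*z_v$ form a family of local 0-cycles on $B$, and the decomposition of $\Br X$ into its ``horizontal'' part (coming from $\Br$ of the geometrically integral generic fibre) and its ``vertical'' part (coming from $\Br B$ via $f^*$) lets me translate orthogonality to $\Br X$ into orthogonality conditions on $B$ together with fibrewise conditions. Hypothesis 1 (splitness in codimension $1$) guarantees, via Lang--Weil and Hensel, that the fibre $X_\theta$ over a closed point $\theta$ lying in general position (outside the finitely many bad codimension-$1$ loci and the non-smooth locus) acquires points over the relevant completions, so that $X_\theta$ carries an everywhere-locally soluble 0-cycle of the correct degree.

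Third, and this is the crux, I would construct a single closed point $\theta\in B$ with residue field $K=k(\theta)$ that simultaneously (i) lies in general position so that $X_\theta$ is smooth and locally soluble, (ii) at the places of $S$ approximates the support of $f_*z_v$ closely enough, and (iii) makes the fibrewise Brauer--Manin conditions on $X_\theta$ vanish. This is precisely the step that, for a curve or a $\P^1$ base, is handled by the devices of Colliot-Th\'el\`ene and Salberger and which does not extend formally. Here I would exploit that $B$ is geometrically rationally connected together with Hypothesis 3: the assumption that Brauer--Manin controls $K$-rational points on $B_K$ for all finite $K/k$ permits, via a transfer from rational points to 0-cycles on rationally connected varieties, an approximation of $f_*z_v$ by a 0-cycle on $B$; a moving lemma exploiting the abundance of rational curves on $B$ then replaces this 0-cycle by a single closed point $\theta$ of controlled degree in general position. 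I expect this construction --- reconciling the approximation of the local data, the general-position and local-solubility requirements on the fibre, and the passage from an effective 0-cycle to a single prime cycle --- to be the main obstacle.

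Finally, with $\theta$ in hand, the fibre $X_\theta$ is a smooth $K$-variety possessing a 0-cycle of the relevant degree, and Hypothesis 2 supplies weak approximation for 0-cycles of degree $1$ on $X_\theta$. I would use it to approximate, at the places above $S$, the local data inherited from $z_v$, producing a 0-cycle $z'$ on $X_\theta$; pushing $z'$ forward along the closed immersion $X_\theta\hookrightarrow X$ and reading it as a $k$-rational 0-cycle via corestriction from $K$ to $k$ yields the desired global 0-cycle $z$ on $X$, which completes the argument.
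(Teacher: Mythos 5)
Your outline is faithful to the general shape of the fibration method, but it leaves a genuine gap at exactly the point you yourself flag as ``the crux''. In your third step you need a single closed point $\theta$ of $B$ that approximates, at the places of $S$, the family of local $0$-cycles $f_*z_v$, and you propose to obtain it by combining Hypothesis 3 with ``a moving lemma exploiting the abundance of rational curves on $B$'' that would convert an approximating $0$-cycle into a single prime cycle in general position. No such device is available: the inability to approximate a family of local effective $0$-cycles on a general (even rationally connected) base $B$ by one global closed point --- even when $B$ satisfies weak approximation for rational points over every finite extension --- is precisely the obstruction that the paper identifies as the reason the classical arguments do not extend beyond $B=\P^1$ or a curve. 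Salberger's device and Hilbert's irreducibility theorem for $0$-cycles perform this conversion only on $\P^1$, where separable effective $0$-cycles are parametrised by separable polynomials; rational connectedness of $B$ by itself gives you nothing of the sort. As written, your step 3 assumes the conclusion of the paper's main technical innovation rather than proving it.

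The paper's actual route circumvents this as follows: replace $X$ by $X'=X\times\P^1$ (harmless since $\Br X\to\Br(X\times\P^1)$ is an isomorphism) and consider $X'\to B'=B\times\P^1\to\P^1$. After the usual reductions (moving lemma, making the pushforwards to $\P^1$ separable and effective of a common degree), one applies Hilbert's irreducibility for $0$-cycles \emph{on the auxiliary} $\P^1$ to find one closed point $\lambda\in\P^1$, in a suitable generalized Hilbertian subset, close to the pushed-forward local cycles. The implicit function theorem then lifts the local data to $k(\lambda)_w$-\emph{rational points} of $X'$ lying over $\lambda$, so the approximation problem on $B$ becomes a problem about rational points of $B_{k(\lambda)}$; Hypothesis 3 (applied over $K=k(\lambda)$), together with Harari's lemma on Hilbertian subsets, then furnishes a $k(\lambda)$-rational point $\theta$ of $B_{k(\lambda)}$ in general position and close to the images of those points, and Hypothesis 2 finishes on the smooth fibre over $\theta$. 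If you want to salvage your write-up, you must replace your step 3 by this $\P^1$-bootstrapping argument (or supply an actual proof of the approximation-by-a-closed-point statement on $B$, which would be a new result).
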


\begin{thm}[\textit{cf.} Theorem \ref{mainThm-section}]\label{simpleThm-section}
Let $f:X\to B$ be a  dominant morphism between proper smooth geometrically rationally connected varieties defined over  a number field $k$. Assume that its geometric generic fibre is integral.

Suppose that
\begin{enumerate}
\item[1,] the generic fibre is geometrically rationally connected and possesses a 0-cycle of degree $1$;
\item[2,]  for all finite extensions $K$ of $k$, the sequence $(\E)$ is exact for all smooth fibres of $f_{K}:X_{K}\to B_{K}$ over $K$-rational points of $B_{K}$;
\item[3,]  for all finite extension $K$ of $k$, the Brauer\textendash Manin obstruction is the only obstruction to weak approximation for $K$-rational points on $B_{K}$.
\end{enumerate}
Then the sequence $(\E)$ is exact for $X$.
\end{thm}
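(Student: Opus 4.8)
The plan is to establish exactness of $(\E)$ for $X$ by the fibration method, splitting the problem into an approximation problem on the base $B$ and a lifting problem on the smooth fibres. I would begin from the standard reformulation of exactness of the inverse-limit sequence: it is enough to show that for every family $(z_v)_{v\in\Omega_k}$ of local 0-cycles of a common degree $\delta$ whose class is orthogonal to $\Br X$ under the Brauer--Manin pairing, and for every finite set $S\subset\Omega_k$ and every integer $n\geq 1$, there is a global 0-cycle $z$ on $X$ of degree $\delta$ having the same image as $z_v$ in $\CH_0(X_{k_v})/n$ for all $v\in S$. After a routine moving step I may assume the $z_v$ are effective, reduced, and supported in the smooth locus of $f$, so that $f_*z_v$ is defined.

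Pushing the family down, set $w_v=f_*z_v$, a family of local 0-cycles of degree $\delta$ on $B$. By functoriality of the Brauer--Manin pairing along $f^*\colon\Br B\to\Br X$, orthogonality of $(z_v)$ to $\Br X$ forces $(w_v)$ to be orthogonal to $\Br B$. Since the generic fibre carries a 0-cycle of degree $1$ (hypothesis 1), I would spread it out to a family of multisections of $f$ whose restriction to a sufficiently general fibre is a 0-cycle of degree $1$; this is the device that lets me prescribe and correct the degrees of the 0-cycles I later build on individual fibres, and hence match $\delta$.

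The crux is to manufacture, from the base data $(w_v)$ and hypothesis 3, a single global closed point (or global 0-cycle) $\theta$ on $B$ supported in the smooth locus of $f$, approximating $w_v$ at the places of $S$ to the needed precision, and with fibre under control. This is exactly the obstacle flagged in the introduction: weak approximation on $B$ is available only for rational points, while $(w_v)$ is a genuine family of 0-cycles. Here I would use the geometric rational connectedness of $B$ to connect the supports of the $w_v$ and reduce the realisation to a situation where hypothesis 3 applies, combined with Harari's formal lemma to absorb the finitely many classes of $\Br X/f^*\Br B$. I expect this to be the hardest point: one must choose $\theta$ so that at every place carrying a nontrivial leftover Brauer class the fibre $X_\theta$ is split and the local condition stays solvable, which calls for a Chebotarev-type density input and careful control of the relevant Brauer classes over all the finite extensions $K$ produced along the way.

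With $\theta$ fixed, restricting $f$ over $\theta$ yields a smooth fibre $X_\theta$ over the number field $K=k(\theta)$, which by base change is a smooth fibre of $f_K\colon X_K\to B_K$ above a $K$-rational point. Hypothesis 1 supplies a 0-cycle of degree $1$ on $X_\theta$ via the multisections, and hypothesis 2 gives exactness of $(\E)$ for $X_\theta$; applying the latter to the local 0-cycles induced on the fibres, I would produce a global 0-cycle on $X_\theta$, viewed inside $X$, agreeing with the prescribed local data modulo $n$ at the places of $S$, and I would fix its degree to $\delta$ using the degree-$1$ multisections. Tracing the construction back shows the resulting $z$ has the same image as $z_v$ in $\CH_0(X_{k_v})/n$ for $v\in S$, which is the desired exactness of $(\E)$.
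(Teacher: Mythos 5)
Your outline correctly identifies where the difficulty lies, but it does not resolve it, and the step you flag as ``the hardest point'' is in fact the entire content of the theorem. Pushing the family $(z_v)$ down to $w_v=f_*z_v$ on $B$ and then trying to produce a single global closed point $\theta$ of $B$ approximating the $w_v$ is precisely the problem the paper's introduction describes as the main obstacle: knowing that the Brauer--Manin obstruction is the only obstruction to weak approximation for \emph{rational points} on $B_K$, for all finite $K/k$, is not known to yield any approximation of a family of local \emph{0-cycles} on $B$ by a single global closed point, because one has no control over the residue fields of the candidate closed points. Your appeal to ``geometric rational connectedness of $B$ to connect the supports of the $w_v$'' together with ``a Chebotarev-type density input'' is not an argument: rational connectedness produces rational curves over $\bar k$ joining geometric points, but gives no mechanism for realising a prescribed collection of local closed points, with their various residue fields $k_v(P_w)$, as the localisations of one global closed point of $B$.

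The paper's proof avoids working with 0-cycles on $B$ altogether. It replaces $X\to B$ by $X'=X\times\P^1\to B'=B\times\P^1\to\P^1$ and pushes the (suitably moved, effective, separable) local 0-cycles all the way down to $\P^1$. There Salberger's device, in the form of Hilbert's irreducibility for 0-cycles (Lemma \ref{Hilbert_irred}), produces a single closed point $\lambda\in\P^1$ lying in a prescribed generalized Hilbertian subset whose localisations $\lambda_v=\sum_{w\mid v}P_w$ match the pushed-down 0-cycles place by place, with the \emph{same} residue fields. This converts the original family of local 0-cycles into a family of local \emph{rational points} $N_w$ of the $k(\lambda)$-variety $B\times\lambda\simeq B_{k(\lambda)}$, indexed by the places $w$ of the number field $k(\lambda)$; only then can hypothesis 3, which is a statement about rational points of $B_K$ for finite extensions $K/k$, be applied (together with Lemma \ref{Hilbert-for-rational-points} to force the resulting rational point $\theta$ into a Hilbertian subset). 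You also omit the comparison of Brauer groups between the generic fibre and the selected closed fibre (Lemma \ref{lemBr}): Harari's formal lemma only controls the finitely many classes $\Gamma_X\subset\Br X_\eta$, and one must know that these specialise onto all of $\Br X_\theta/\Br k(\theta)$ before hypothesis 2 can be invoked on the fibre. As written, your proposal reduces the theorem to an unproved, and in general open, approximation statement for 0-cycles on $B$.
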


By considering the usual external product of cycles, it is clear that the Brauer\textendash Manin obstruction is the only obstruction to the Hasse principle for 0-cycles of degree $1$ on $X\times Y$ if it is the case for both $X$ and $Y$. Surprisingly, for weak approximation properties, the following immediate consequence of the theorems was not known. Even in this most trivial case, arguments for the analogue  on rational points do not extend directly to 0-cycles and in our proof we need to impose rational connectedness on the varieties.

\begin{cor}\label{trivialcor}
Let $X,Y$ be geometrically rationally connected varieties defined over a number field $k$.

Consider the following conditions  for all finite extension $K$ of $k$
\begin{enumerate}
\item[1,]  the Brauer\textendash Manin obstruction is the only obstruction to weak approximation for $K$-rational points on $X_{K}$;
\item[2,]  $Y_{K}$ verifies weak approximation for 0-cycles of degree $1$;
\item[2',] the sequence $(\E)$ is exact for $Y_{K}$.
\end{enumerate}
If conditions 1 and 2 are satisfied, then the sequence $(\E)$ is exact for $X\times Y$.\ \\
If conditions 1 and 2' are satisfied, and assume furthermore that $Y$ possesses a 0-cycle of degree $1$, then  the sequence $(\E)$ is exact for $X\times Y$.
\end{cor}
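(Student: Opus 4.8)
The plan is to apply Theorems~\ref{simpleThm-geo-int} and~\ref{simpleThm-section} directly to the first projection $f\colon X\times Y\to X$, regarding $X\times Y$ as a fibration over the base $B=X$. First I would record the standing hypotheses: $X\times Y$ is again proper, smooth and geometrically rationally connected (a product of geometrically rationally connected varieties is geometrically rationally connected), the projection $f$ is dominant, and the scheme-theoretic fibre of $f$ over a point $x\in X$ is $Y_{\kappa(x)}$. In particular the generic fibre is $Y_{k(X)}$ and the geometric generic fibre is $Y_{\overline{k(X)}}$, which is integral because $Y$, being geometrically rationally connected, is geometrically integral. Thus the general framework of both theorems applies and the geometric generic fibre is integral.

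For the first assertion I would check the three hypotheses of Theorem~\ref{simpleThm-geo-int}. Every fibre of $f$ is a base change of the geometrically integral variety $Y$, hence is itself geometrically integral; therefore $f$ is split over every point of $X$ and, a fortiori, split in codimension~$1$, giving hypothesis~1. The fibre over a $K$-rational point $x$ of $X_K$ is $Y_K$, which is smooth since $Y$ is smooth, and condition~2 of the corollary says exactly that $Y_K$ verifies weak approximation for $0$-cycles of degree~$1$; this is hypothesis~2. Hypothesis~3 is verbatim condition~1 of the corollary applied to $B=X$. Theorem~\ref{simpleThm-geo-int} then yields the exactness of $(\E)$ for $X\times Y$.

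For the second assertion I would instead invoke Theorem~\ref{simpleThm-section}. The generic fibre $Y_{k(X)}$ is geometrically rationally connected, and the pullback of a degree-$1$ zero-cycle on $Y$ provides a degree-$1$ zero-cycle on $Y_{k(X)}$, so hypothesis~1 holds. As before, the smooth fibres over $K$-rational points are the varieties $Y_K$, for which condition~2' asserts precisely the exactness of $(\E)$, giving hypothesis~2; and hypothesis~3 is again condition~1 of the corollary. Theorem~\ref{simpleThm-section} then gives the exactness of $(\E)$ for $X\times Y$.

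All the substance of the statement is carried by the two theorems, so no genuine obstacle arises; the only work is bookkeeping. The points requiring care are that the product stays proper, smooth and geometrically rationally connected, that the geometric generic fibre remains integral, and—most delicately in the abstract setting—that the fibration is split in codimension~$1$. Here the last point is immediate precisely because every fibre is a base change of the geometrically integral $Y$. The interest of the corollary, as the authors stress, is that even this ostensibly trivial weak-approximation case seems to demand the rational-connectedness input of the two theorems rather than an elementary external-product argument, which is why one routes it through the fibration method rather than proving it by hand.
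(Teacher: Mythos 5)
Your proof is correct and follows exactly the route the paper intends: the corollary is stated there as an immediate consequence of Theorems \ref{simpleThm-geo-int} and \ref{simpleThm-section} applied to the trivial fibration $pr_X\colon X\times Y\to X$, and the paper explicitly notes (in \S\ref{appl-subsection}) that the splitness hypothesis is automatic for such trivial fibrations, just as you argue. Your verification of the remaining hypotheses (integrality of the geometric generic fibre, identification of the fibres over $K$-points with $Y_K$, and pullback of the degree-$1$ zero-cycle to the generic fibre) is the same bookkeeping the paper leaves implicit.
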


Besides trivial fibrations, we have several applications  of the detailed Theorems \ref{mainThm-geo-int}, \ref{mainThm-section}, we mention the following one which follows directly from  Theorem \ref{simpleThm-section}. We refer to \S \ref{appl-subsection} for more details and comments.
\begin{cor}
Let $B$ be a smooth compactification of a homogeneous space of a connected linear algebraic group with connected stabilizer defined over a number field $k$. Let $X\to B$ be a proper dominant morphism whose generic fibre is birationally defined by the polynomial
$$N_{L|k(B)}(\textup{x})=P(t),$$
where $P(t)\in k(B)[t]$ is a separable polynomial of degree prime to the degree of the finite extension $L$ of $k(B)$.

Then $(\E)$ is exact for $X$.
\end{cor}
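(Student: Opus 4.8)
The plan is to deduce the statement from Theorem~\ref{simpleThm-section}, so the entire task reduces to verifying its three hypotheses. Write $F=k(B)$, $n=[L:F]$ and $d=\deg P$, so that $\gcd(n,d)=1$. First I would fix the geometry: $B$ is already proper and smooth, and after resolving singularities (we are in characteristic~$0$) I may take $X$ proper smooth with $f$ proper and dominant. Since $B$ compactifies $G/H$ with $G$ connected linear and $H$ connected, it is geometrically unirational (the geometric $G$ is rational and $G\to G/H$ has rational fibres), hence geometrically rationally connected. The generic fibre is birational to $N_{L/F}(x)=P(t)$, which over $\bar F$ becomes $\prod_{i=1}^{n}x_i=\prod_{j=1}^{d}(t-a_j)$ with distinct $a_j$; solving for $x_1$ shows this is rational, so the generic fibre is integral and geometrically rationally connected. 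By Graber\textendash Harris\textendash Starr, $X$ is then geometrically rationally connected and the geometric generic fibre is integral, matching the standing hypotheses of Theorem~\ref{simpleThm-section}.

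Hypothesis~1 requires a $0$-cycle of degree~$1$ on the generic fibre $Y_\eta$; as the index is a birational invariant of smooth proper varieties, I may compute on the norm model. Taking $x=1$ forces $N_{L/F}(x)=1$, so the zeros of $P(t)-1$ form an $F$-rational effective $0$-cycle of degree $d$. On the other hand, since $L/F$ is separable the multiplication map splits $L\otimes_F L\cong L\times A$ as $L$-algebras; setting the $L$-coordinate equal to $P(t)$ and the $A$-coordinate equal to $1$ defines an $L$-rational section of $Y_\eta\times_F L\to\mathbb{P}^1_{L}$, whose value at a suitable $t_0\in F$ with $P(t_0)\neq 0$ and smooth fibre is a closed point of $Y_\eta$ of some degree $m\mid n$. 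Because $\gcd(d,n)=1$ forces $\gcd(d,m)=1$, an integral combination of these two cycles has degree~$1$.

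For Hypothesis~3, each $B_K$ is again a smooth compactification of the homogeneous space $G_K/H_K$ of a connected linear group with connected stabilizer, and for such spaces the Brauer\textendash Manin obstruction is the only obstruction to weak approximation for rational points (Borovoi); this holds uniformly in the finite extension $K/k$. Hypothesis~2 is where the self-similarity of the construction enters: a smooth fibre $Y_b$ over $b\in B(K)$ is itself birational to $N_{L_b/K}(x)=P_b(t)$, i.e.\ a fibration $Y_b\to\mathbb{P}^1_K$ whose generic fibre is a torsor under the connected norm torus $R^1_{L_b/K}\mathbb{G}_m$ and whose fibres over the zeros of $P_b$ and over $t=\infty$ are split in the sense of Definition~\ref{codim1split}. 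Thus Hypothesis~2 is precisely the exactness of $(\E)$ for norm fibrations over the base $\mathbb{P}^1$, which is the curve/projective-line case recalled in the introduction (Salberger, Colliot-Th\'el\`ene, Harpaz\textendash Wittenberg). With the three hypotheses in hand, Theorem~\ref{simpleThm-section} gives the exactness of $(\E)$ for $X$.

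I expect Hypothesis~2 to be the main obstacle. One must have $(\E)$ for \emph{all} relevant smooth fibres $Y_b$, over every $K$-point and every finite $K/k$, which means the $\mathbb{P}^1$-base theory has to apply uniformly to the norm-torus fibration $Y_b\to\mathbb{P}^1_K$: in particular one must control the degenerate fibres of $Y_b$ over the zeros of $P_b$ and over $t=\infty$ and check that the splitness and local weak-approximation inputs of those results are genuinely met, the coprimality $\gcd(n,d)=1$ once more being what keeps the degrees of the vertical components from obstructing the existence of the needed $0$-cycles. By contrast, Hypotheses~1 and~3 are essentially formal once these structural inputs are assembled.
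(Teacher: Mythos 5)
Your overall route is the paper's: reduce to the fibration theorem and check its three hypotheses. Hypotheses 1 and 3 are handled essentially as in the paper's own list of applications, namely item ($B_{1}$) of \S\ref{appl-subsection} (Borovoi) for the base, and the coprimality argument for the index of the generic fibre (the degree-$d$ cycle cut out by $x=1$ together with the cycle of degree dividing $n=[L:k(B)]$ obtained after base change to $L$); these parts of your proposal are correct and match the intended proof.

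The gap is hypothesis 2, which you rightly single out as the crux but do not actually close. Two concrete points. First, you cannot run the argument through Theorem~\ref{simpleThm-section} as literally stated: its hypothesis 2 demands the exactness of $(\E)$ for \emph{all} smooth fibres over \emph{all} $K$-points, and a smooth fibre over an arbitrary $b\in B(K)$ need not retain the norm-form structure \textemdash~ the specialized polynomial $P_{b}$ may fail to be separable and $L\otimes_{k(B)}K(b)$ may fail to be a field, in which case the fibre is a multinorm variety for which no exactness result is available. The paper's actual proof goes through the refined Theorem~\ref{mainThm-section}, whose hypothesis 2 only concerns fibres over a generalized Hilbertian subset and over extensions $K\in\K_{L}$, precisely so that the fibres one must control are again of the form $N_{L_{b}/K}(\textup{x})=P_{b}(t)$ with $P_{b}$ separable, $L_{b}$ a field, and the degree coprimality preserved. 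Second, the input for those fibres is not the generic ``$\P^{1}$-base theory'' you invoke: Salberger's device and Colliot-Th\'el\`ene's curve results do not by themselves give $(\E)$ for these norm fibrations over $\P^{1}$, and the paper explicitly expresses doubt that the Harpaz\textendash Wittenberg results cover them. The result actually used is \cite[Thm.~4.1]{CT-Sk-SD}, which requires $L/k(B)$ cyclic, as reflected in item ($F_{4}$) of \S\ref{appl-subsection}; with a general finite extension $L$, as in your sketch, there is no supporting theorem, so either cyclicity must be imposed or a genuinely new argument supplied for hypothesis 2.
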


\section{Main results and applications}\label{sectionmainresult}

\subsection{Notation and terminology}\label{notationsection}
We begin with fixing the notation.  The base field $k$ will be a number field unless otherwise stated. The set of its places is denoted by $\Omega_{k}$ and $S$ denotes always a finite subset of $\Omega_{k}$. Let $k'$ be a finite extension of $k$, the set of places of $k'$ lying above places in $S$ will be denoted by $S\otimes_{k}k'$. We will consider algebraic varieties (separated schemes of finite type) $X$ defined over $k$. Its cohomological Brauer group is  $\Br X=\H^{2}_{\scriptsize{\textup{\'et}}}(X,\mathbb{G}_{m})$, and its Chow group of 0-cycles is denoted by $\CH_{0}(X)$. The base change $X\times_{\Spec(k)}\Spec(K)$ is written simply $X_{K}$ for any extension $K$ of $k$. The modified local Chow group $\CH'_{0}(X_{k_v})$ is defined to be the usual Chow group $\CH_{0}(X_{k_v})$ if $v$ is a non-archimedean  place, and otherwise $\coker\left[N_{\bar{k}_{v}|k_{v}}:\CH_{0}(X_{\bar{k}_{v}})\to\CH_{0}(X_{k_{v}})\right]$, in particular it is $0$ if $v$ is a complex place.

Let $A$ be an abelian group, for non-zero integer $n$ we denote by $A/n$ the cokernel $\coker\left[n:A\to A\right]$ of the multiplication by $n$.
Let $L/K$ be a Galois extension of Galois group $\Gal_{L|K}$ and $M$ be a continuous  $\Gal_{L|K}$-module, we denote its Galois cohomology groups by $\H^{i}(L|K,M)$. If $L=\bar{K}$ is a separable closure of $K$ we will write $\H^{i}(K,M)$ for short.

Before the statement of the main results, we recall some terminology.

Let $X$ be a proper smooth geometrically integral variety defined over $k$.
By extending Manin's well-known pairing \cite{Manin}
$$\prod_{v\in\Omega_{k}}X(k_{v})\times\Br X\to\Q/\Z$$
one can define a natural pairing by evaluating  cohomology classes at closed points of $X_{k_v}$
$$\prod_{v\in\Omega_{k}}\CH_{0}(X_{k_v})\times\Br X\to\Q/\Z,$$ which factorizes through
$$\prod_{v\in\Omega_{k}}\CH'_{0}(X_{k_v})\times\Br X\to\Q/\Z,$$
see for example \cite[page 53]{CT95} or \cite[\S 1.1]{Wittenberg} for a detailed definition. This induces a local-to-global complex
$$\CH_{0}(X)\to\prod_{v\in\Omega_{k}}\CH'_{0}(X_{k_v})\to\Hom(\Br X,\Q/\Z).$$
Since $X$ is regular, its Brauer group is torsion, one deduces the complex mentioned in the introduction
$$\varprojlim_{n}\CH_{0}(X)/n\to\prod_{v\in\Omega_{k}}\varprojlim_{n}\CH'_{0}(X_{k_v})/n\to\Hom(\Br X,\Q/\Z),\leqno(\E).$$
Its exactness means roughly that the obstruction to the local-global principle is controlled by the Brauer group.

\begin{ter}
Let $\delta\in\Z$ be an integer. Let $X$ be a smooth projective geometrically integral variety defined over a number field $k$.

For a family of algebraic varieties, the \emph{Hasse principle for 0-cycles of degree $\delta$} says that the existence of a family of local 0-cycles of degree $\delta$  implies the existence of a global 0-cycle of degree $\delta$. We say that the \emph{Brauer\textendash Manin obstruction is the only obstruction to the Hasse principle for 0-cycles of degree $\delta$ on $X$}, if the existence of local families of 0-cycles of degree $\delta$ orthogonal to $\Br X$ implies the existence of a global 0-cycle of degree $\delta$.

We say that $X$ verifies \emph{weak approximation for 0-cycles of degree $\delta$} if the following statement is satisfied.
\begin{itemize}
\item[]
Given a family of local 0-cycles $\{z_{v}\}_{v\in\Omega_{k}}$ of degree $\delta$,
for any positive integer $n$ and for any finite subset $S$ of $\Omega_{k}$, there exists a global 0-cycle $z=z_{n,S}$ of degree $\delta$ such that
$z$ and $z_{v}$ have the same image in $\CH_{0}(X_{k_v})/n$ for all $v\in S.$
\end{itemize}
We say that  the \emph{Brauer\textendash Manin obstruction is the only obstruction to weak approximation for 0-cycles of degree $\delta$ on $X$}, if the following statement is satisfied.
\begin{itemize}
\item[]
Given a family of local 0-cycles $\{z_{v}\}_{v\in\Omega_{k}}$ of degree $\delta$ orthogonal to $\Br X,$
for any positive integer $n$ and for any finite subset $S$ of $\Omega_{k}$, there exists a global 0-cycle $z=z_{n,S}$ of degree $\delta$ such that
$z$ and $z_{v}$ have the same image in $\CH_{0}(X_{k_v})/n$ for all $v\in S.$
\end{itemize}

We say that $X$ verifies \emph{weak approximation for rational points} if the diagonal image of $X(k)$ in $\prod_{v\in\Omega_{k}}X(k_{v})$ is dense.
We say that the  \emph{Brauer\textendash Manin obstruction is the only obstruction to weak approximation for rational points} if the closure of the diagonal image of $X(k)$ in $\prod_{v\in\Omega_{k}}X(k_{v})$ equals to the subset consisting of local points that are orthogonal to  $\Br X$. See \cite[\S 5]{Skbook} or the survey  \cite[2.10]{Peyre} for more details.
\end{ter}

\begin{rem}\mbox{}

\begin{enumerate}
\item The exactness of $(\E)$ implies that the Brauer\textendash Manin obstruction is the only obstruction to the Hasse principle for 0-cycles of degree $1$, \cite[Rem. 1.1(iii)]{Wittenberg}.
\item The exactness of $(\E)$ implies that Brauer\textendash Manin obstruction is the only obstruction to weak approximation for 0-cycles of degree $\delta$ if $X$ possesses  a global 0-cycle of degree $1$, \cite[Prop. 2.2.1]{Liang4}.
\item The exactness of $(\E)$ implies that Brauer\textendash Manin obstruction is the only obstruction to weak approximation for 0-cycles of degree $1$, this follows from the previous two.
\end{enumerate}
\end{rem}

\begin{defn}
Let $B$ be a geometrically integral $k$-variety. A subset $\Hil$ of closed points of $B$ is called a \emph{generalized Hilbertian subset} if there exists a finite \'etale morphism $Z\buildrel{\rho}\over\to U\subset B$ where $Z$ is an integral $k$-variety and $U$ is a nonempty open subvariety of $B$ such that
$$\Hil=\{\theta\in U|\theta\mbox{ is a closed point of }B\mbox{ such that the fibre }\rho^{-1}(\theta)\mbox{ is connected}\}.$$
\end{defn}

\begin{defn}\label{codim1split}
Consider a dominant morphism $f:X\to B$ between proper smooth geometrically integral $k$-varieties, whose generic fibre $X_{\eta}$ is geometrically integral. We say that  the fibration $f$ is \emph{split in codimension  $1$} if the following condition is satisfied.
\begin{enumerate}
\item[-]
For any discrete valuation ring $R$ containing $k$ and whose fraction field equals to the function field $k(B)$, there exists an integral regular $R$-model $\mathcal{X}$ flat and proper over $R$ whose generic fibre is $k(B)$-isomorphic to $X_{\eta}$, such that its special fibre has an irreducible component of multiplicity $1$ which is geometrically integral.
\end{enumerate}
\end{defn}

By \cite[Cor. 1.2]{Sk2}, this property does not depend on the choice of a particular integral regular model, this is a property of the generic fibre.

\subsection{Statements of the main results}

The main results of this paper concern the exactness of $(\E)$ for fibrations over a base of dimension $m>1$ which is not required to be birational to $\P^{m}$ over $k$.

For a fixed finite extension $L$ of $k$, we denote by $\mathcal{K}_{L}$ the set of finite extensions $K$ of $k$ that are linearly disjoint from $L$, \textit{i.e.} $L\otimes_{k}K$ is a field.

\begin{thm}\label{mainThm-geo-int}
Let $f:X\to B$ be a dominant morphism between proper smooth geometrically integral varieties defined over  a number field $k$. Assume that its geometric generic fibre is integral.

Suppose that
\begin{enumerate}
\item[1,]  the fibration $f$ is split in codimension  $1$;
\item[2,]  for each $K\in\K_{L}$, there exists a generalized Hilbertian subset $\Hil_{K}$ of $B_{K}$ such that for all $K$-rational points $\theta$ contained in $\Hil_{K}$  the fibre $X_{K\theta}=f_{K}^{-1}(\theta)$ verifies weak approximation (resp.  the Hasse principle) for 0-cycles of degree $1$;
\item[3.1,] $\H^{1}(B_{\bar{k}},\mathcal{O}_{B_{\bar{k}}})=0$, $\H^{2}(B_{\bar{k}},\mathcal{O}_{B_{\bar{k}}})=0$, and the N\'eron\textendash Severi group $\textup{NS}B_{\bar{k}}$ is torsion-free;
\item[3.2,]  for all $K\in\K_{L}$, the Brauer\textendash Manin obstruction is the only obstruction to  weak approximation for $K$-rational points on $B_{K}$.
\end{enumerate}
Then the Brauer\textendash Manin obstruction is the only obstruction to weak approximation (resp. to the Hasse principle) for 0-cycles of degree $\delta$  on $X$.

Moreover if
\begin{enumerate}
\item[4,] $\Pic X_{\bar{k}}$ is torsion-free, and $\deg:\CH_{0}(X\otimes\overline{k(X)})\to\Z$ is injective,
\end{enumerate}
then the sequence $(\E)$ is exact for $X$.
\end{thm}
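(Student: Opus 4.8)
\section*{Proof proposal}

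The plan is to prove the main conclusion (that the Brauer\textendash Manin obstruction is the only obstruction to weak approximation, resp. to the Hasse principle, for $0$-cycles of degree $\delta$ on $X$) by a fibration argument that approximates a family of local $0$-cycles on $X$ by first projecting to $B$ and then lifting back into fibres, and to deduce exactness of $(\E)$ from this under the extra hypothesis~4. I would begin with the following setup. Suppose given a family $\{z_v\}_{v\in\Omega_k}$ of local $0$-cycles of degree $\delta$ on $X$, orthogonal to $\Br X$, together with a modulus $n$ and a finite set $S$. Pushing forward by $f$ produces a family $\{f_*z_v\}_v$ of local $0$-cycles of degree $\delta$ on $B$. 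The first step is to reduce, after enlarging $S$, to the situation where each $z_v$ is supported on smooth fibres lying over \emph{closed points} (not just $0$-cycles) of $B$, and where the relevant Brauer classes on $X$ come, away from a controlled finite set, from $\Br B$ and from vertical classes concentrated over a finite subscheme of $B$; this is the standard preliminary manipulation allowed by properness and the finiteness of $\Br X_{\bar k}$ under hypothesis~3.1 (which forces $\Pic B_{\bar k}$ finitely generated torsion-free and $\Br B_{\bar k}$ finite, so the geometric and arithmetic Brauer groups of $B$ are as tame as possible).

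The heart of the argument is to replace the $0$-cycle $f_*z_v$ on the base by a single well-chosen closed point whose residue field is a field extension $K$ lying in $\K_L$, and whose fibre is smooth. Concretely, I would invoke hypothesis~3.2 (Brauer\textendash Manin is the only obstruction to weak approximation for $K$-points on $B_K$ for every $K\in\K_L$) in the guise of Salberger's/Colliot-Th\'el\`ene's technique for moving $0$-cycles on the base: one produces, compatibly for all $v\in S$, a closed point $\theta$ of $B$ with residue field $K\in\K_L$, lying in the generalized Hilbertian subset $\Hil_K$ furnished by hypothesis~2, such that the base-changed family of local points of $B_K$ approximates the given local data and is orthogonal to $\Br B_K$. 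This is exactly where the rational-connectedness of $B$ and the cohomological vanishing in~3.1 are used: they guarantee that the only relevant obstruction on the base is Brauer\textendash Manin, that generalized Hilbertian subsets meet the approximating points, and that the splitting-in-codimension-$1$ hypothesis~1 keeps the chosen fibre smooth and geometrically integral. The condition $K\in\K_L$ ensures the fibre over $\theta$, viewed over $K$, falls under hypothesis~2. I expect \textbf{this step} to be the main obstacle: lifting the topological approximation of a \emph{family} of effective local $0$-cycles on $B$ to a single global closed point in a Hilbertian subset, while simultaneously controlling the Brauer pairing, is precisely the difficulty flagged in the introduction as not following from the rational-point fibration method, and it must be carried out over a base more general than $\P^m$.

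Once such a $\theta$ with residue field $K$ is fixed, the fibre $X_{K\theta}=f_K^{-1}(\theta)$ satisfies weak approximation (resp. the Hasse principle) for $0$-cycles of degree $1$ by hypothesis~2. The next step is to descend the local data on $X$ to local data on this fibre: by a norm/restriction (corestriction) compatibility between $\CH'_0$ of $X_{k_v}$ and $\CH'_0$ of the fibre over the places of $K$ above $v$, the orthogonality of $\{z_v\}$ to $\Br X$ transfers to orthogonality of the induced family on $X_{K\theta}$ to $\Br X_{K\theta}$ modulo the classes already accounted for on the base. Applying the fibre hypothesis then yields a global $0$-cycle of degree $1$ on $X_{K\theta}$ approximating the local data, and pushing it forward to $X$ (via the closed immersion of the fibre) produces a global $0$-cycle on $X$ of the correct degree that approximates $\{z_v\}$ modulo $n$ at all $v\in S$; this proves the principal conclusion. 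Finally, to obtain exactness of $(\E)$ under hypothesis~4, I would feed the principal conclusion into the formal machinery relating ``Brauer\textendash Manin is the only obstruction for every degree $\delta$'' to exactness of the full inverse-limit sequence: hypothesis~4 ($\Pic X_{\bar k}$ torsion-free and $\deg$ injective on $\CH_0(X\otimes\overline{k(X)})$) is exactly what is needed to pass from the degree-wise statements to the exactness of $(\E)$, along the lines of \cite{Wittenberg} and \cite{Liang4}, thereby completing the proof.
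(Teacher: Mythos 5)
Your proposal reproduces the outer skeleton of the fibration method (project the local data to the base, find one good closed point $\theta$ in a Hilbertian subset with residue field in $\K_{L}$, apply the fibre hypothesis, push back to $X$), but it leaves the decisive step unproved, and this is a genuine gap rather than a technicality. You propose to produce the closed point $\theta$ approximating the pushed-forward local $0$-cycles ``in the guise of Salberger's/Colliot-Th\'el\`ene's technique'', invoking hypothesis 3.2. But hypothesis 3.2 is a statement about $K$-\emph{rational points} of $B_{K}$, while Salberger's device (resp.\ Colliot-Th\'el\`ene's argument) is only available when the base is $\P^{1}$ (resp.\ a curve with finite Tate--Shafarevich group); for a general rationally connected base no such technique exists --- this is exactly the obstacle the paper isolates in its introduction, and you yourself flag it as ``the main obstacle'' without resolving it. The paper's actual proof circumvents it by an idea absent from your proposal: it replaces $f$ by the composite $X'=X\times\P^{1}\to B'=B\times\P^{1}\to\P^{1}$ and applies Hilbert's irreducibility theorem for $0$-cycles (Lemma \ref{Hilbert_irred}, i.e.\ Salberger's device used legitimately, over $\P^{1}$) to this composite. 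This produces a single closed point $\lambda\in\Hil_{\P^{1}}$ close to the pushforwards of the (moved, effective, separable) local cycles, and the crucial effect is a \emph{change of ground field}: over $K=k(\lambda)\in\K_{L}$ the local $0$-cycle data become local \emph{rational} points of $X_{K}$ (Krasner plus the implicit function theorem), their images in $B_{K}$ are local rational points orthogonal to $\Br B_{K}$ (via $\Br B/\Br k\simeq\Br B_{K}/\Br K$ for $\lambda$ in a suitable Hilbertian set), and only then can the rational-point hypothesis 3.2, combined with Lemma \ref{Hilbert-for-rational-points}, be applied to produce $\theta\in\Hil_{\lambda}\cap B_{K}(K)$. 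Without this detour through the auxiliary $\P^{1}$, your plan has no mechanism for producing $\theta$.

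Two further inaccuracies. First, you assign hypothesis 1 the role of keeping ``the chosen fibre smooth and geometrically integral''; its actual role (Remark \ref{rmksplitness}) is arithmetic: via Denef's theorem, splitness in codimension $1$ yields hypothesis $1'$, namely that fibres over an open set possess local $0$-cycles of degree $1$ at all places outside a fixed finite set $S$. This is indispensable, because the weak-approximation hypothesis 2 on the fibre $X_{K\theta}$ can only be fed a \emph{complete} family of local $0$-cycles of degree $1$ over all places of $K$, whereas the implicit-function-theorem lifting supplies local points only above places in $S\otimes_{k}K$; your proposal never completes the family at the remaining places. (Conversely, the fibre-level Brauer bookkeeping you describe is not needed in this theorem: since the fibres satisfy weak approximation unconditionally, the paper can drop both the comparison of Brauer groups and Harari's formal lemma here.) Second, the passage from the degree-wise statement to exactness of $(\E)$ under hypothesis 4 is not purely formal: the paper applies Wittenberg's fibration criterion to $X'\to\P^{1}$, so even this final reduction uses the auxiliary $\P^{1}$ factor that your argument lacks.
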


\begin{thm}\label{mainThm-section}
Let $f:X\to B$ be a  dominant morphism between proper smooth geometrically integral varieties defined over  a number field $k$. Assume that its geometric generic fibre is integral.

Suppose that
\begin{enumerate}
\item[1.1,] for the geometric generic fibre $X_{\bar{\eta}}=X_{\eta}\otimes \overline{k(B)}$,  the N\'eron\textendash Severi group $\textup{NS}X_{\bar{\eta}}$ is torsion-free, and $\H^{1}(X_{\bar{\eta}},\mathcal{O}_{X_{\bar{\eta}}})=0$, $\H^{2}(X_{\bar{\eta}},\mathcal{O}_{X_{\bar{\eta}}})=0$.
\item[1.2,]  the generic fibre $X_{\eta}$ viewed as a $k(B)$-variety possesses  a 0-cycle of degree $1$;
\item[2,]  for each $K\in\K_{L}$, there exists a generalized Hilbertian subset $\Hil_{K}$ of $B_{K}$ such that for all $K$-rational points $\theta$ contained in $\Hil_{K}$ the Brauer\textendash Manin obstruction is the only obstruction to weak approximation (resp. to the Hasse principle) for 0-cycles of degree $1$  on the fibre $X_{K\theta}=f_{K}^{-1}(\theta)$;
\item[3.1,]   $\H^{1}(B_{\bar{k}},\mathcal{O}_{B_{\bar{k}}})=0$, $\H^{2}(B_{\bar{k}},\mathcal{O}_{B_{\bar{k}}})=0$, and the N\'eron\textendash Severi group $\textup{NS}B_{\bar{k}}$ is torsion-free;
\item[3.2,]  for all $K\in\K_{L}$, the Brauer\textendash Manin obstruction is the only obstruction to weak approximation for $K$-rational points on $B_{K}$.
\end{enumerate}
Then the Brauer\textendash Manin obstruction is the only obstruction to weak approximation (resp. to the Hasse principle) for 0-cycles of degree $\delta$   on $X$.

Moreover if
\begin{enumerate}
\item[4,] $\Pic X_{\bar{k}}$ is torsion-free, and $\deg:\CH_{0}(X\otimes\overline{k(X)})\to\Z$ is injective,
\end{enumerate}
then the sequence $(\E)$ is exact for $X$.
\end{thm}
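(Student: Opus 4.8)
The plan is to prove the two assertions in turn: first the main conclusion, that the Brauer\textendash Manin obstruction is the only obstruction to weak approximation (resp. the Hasse principle) for 0-cycles of degree $\delta$ on $X$, and then, under hypothesis 4, its upgrade to the exactness of $(\E)$. The second step I expect to be formal: torsion-freeness of $\Pic X_{\bar{k}}$ and injectivity of $\deg:\CH_{0}(X\otimes\overline{k(X)})\to\Z$ are exactly the hypotheses under which the inverse limits $\varprojlim_{n}\CH_{0}(\cdot)/n$ are controlled, so that one recovers the exactness of $(\E)$ from the conjunction of the Brauer\textendash Manin descriptions of the Hasse principle and of weak approximation for 0-cycles of degree $\delta$, by a comparison that becomes an equivalence under hypothesis 4 (cf. \cite{Liang4}). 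The real content is thus the first conclusion, which I would attack by the fibration method for 0-cycles.

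The setup is standard: fix a family $\{z_{v}\}_{v\in\Omega_{k}}$ of local 0-cycles of degree $\delta$ orthogonal to $\Br X$, together with $n$ and $S$. After enlarging $S$ to contain the archimedean places and the places of bad reduction, I would use a moving lemma to put each $z_{v}$ in good position, namely supported on the smooth locus of $f$ over closed points of a fixed dense open $U\subseteq B$ with separable residue fields. The aim is then to replace the whole family by a single fibre: to produce a finite extension $K\in\K_{L}$ and a $K$-rational point $\theta\in\Hil_{K}$ of $B_{K}$ (equivalently a closed point $\theta$ of $B$ with residue field $K$ lying in the prescribed generalized Hilbertian subset) whose fibre $X_{\theta}=f^{-1}(\theta)$ carries local 0-cycles, over the places of $K$, that are induced by the $z_{v}$, are of the correct degree, and are orthogonal to $\Br X_{\theta}$. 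Granting such a $\theta$, hypothesis 2 applies verbatim to the $K$-variety $X_{\theta}$, yielding a global 0-cycle on $X_{\theta}\subseteq X$ approximating the induced data; here hypothesis 1.2 enters twice, once to compare 0-cycles on $X$ with 0-cycles on $B$ through the degree-$1$ multisection it furnishes, and once to correct the degree of the resulting cycle to $\delta$.

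Making this work requires first controlling Brauer groups. The cohomological hypotheses 3.1 on $B$ force $\Br B_{\bar{k}}=0$, so that $\Br B=\Br_{1}B$ is finite modulo $\Br k$ and the Brauer\textendash Manin set of $B$ is purely algebraic; symmetrically, hypothesis 1.1 gives $\Br X_{\bar{\eta}}=0$, hence $\Br X_{\bar{\theta}}=0$ for $\theta$ in a dense set, which is what lets me decompose $\Br X$ into vertical and horizontal parts and transfer orthogonality of $\{z_{v}\}$ to $\Br X$ into orthogonality of the induced fibre data to $\Br X_{\theta}$. Second, I must actually produce $\theta$: hypothesis 3.2 supplies weak approximation for $K$-rational points on $B_{K}$ with Brauer\textendash Manin the only obstruction, while a Hilbert-irreducibility argument keeps $\theta$ inside $\Hil_{K}$ and arranges $K\in\K_{L}$, and the rational connectedness of $B$ together with $\Br B_{\bar{k}}=0$ is what allows the pushed-forward local 0-cycles $f_{*}z_{v}$ to be matched by one global closed point.

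The hard part is exactly the difficulty flagged in the introduction: weak approximation for \emph{rational points} on $B$ does not by itself allow one to approximate a family of effective local \emph{0-cycles} on $B$ by a single global closed point, since the $f_{*}z_{v}$ are genuine 0-cycles of varying residue degree. Overcoming this is where I expect to spend most of the effort, and where the geometric rational connectedness of $B$ is indispensable; meeting all three constraints on $\theta$ simultaneously \textemdash\ matching the local cycle classes modulo $n$ at every $v\in S$, landing in $\Hil_{K}$, and keeping $K\in\K_{L}$ \textemdash\ is the delicate heart of the argument. Once $\theta$ is in hand the remaining steps are bookkeeping, and the final upgrade to the exactness of $(\E)$ under hypothesis 4 is purely formal.
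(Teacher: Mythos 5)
Your outline correctly identifies where the difficulty lies, but at precisely that point it stops: you write that producing the closed point $\theta$ which simultaneously approximates the pushed-forward local 0-cycles, lies in $\Hil_{K}$, and has residue field in $\K_{L}$ ``is where I expect to spend most of the effort,'' and you offer no mechanism for doing it beyond invoking rational connectedness of $B$. This is a genuine gap, and it is exactly the obstacle the introduction flags as the reason the rational-points fibration method does not transfer to 0-cycles: weak approximation for $K$-rational points on $B_{K}$ (hypothesis 3.2) lets you approximate a family of local \emph{rational} points by a global rational point, but the $f_{*}z_{v}$ are 0-cycles whose supports have varying residue fields, and no statement in the paper (or elsewhere) lets you approximate such a family on a general rationally connected base by a single closed point. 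The paper's proof circumvents this by an auxiliary projective line: it replaces $X$ by $X'=X\times\P^{1}$ (harmless since $\Br X\simeq\Br X'$), pushes the local 0-cycles all the way down to $\P^{1}$ via $pr_{\P^{1}}\circ F$, makes the images separable by the relative moving lemma (Lemma \ref{moving lemma}), and applies Hilbert's irreducibility theorem for 0-cycles on $\P^{1}$ (Lemma \ref{Hilbert_irred}, Salberger's device) to find one closed point $\lambda\in\Hil_{\P^{1}}$ whose local decomposition matches the pushed-forward cycles place by place. The decisive consequence is that over $K=k(\lambda)$ the entire local family becomes a family of local \emph{rational} points $M_{w}$ of $X'$ lying over local rational points $N_{w}=F(M_{w})$ of $B\times\lambda\simeq B_{K}$, so that hypothesis 3.2 together with Harari's lemma on Hilbertian subsets (Lemma \ref{Hilbert-for-rational-points}) produces the desired $\theta\in\Hil_{\lambda}(K)$. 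Without this reduction from 0-cycles to rational points your plan cannot be completed.

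Two secondary inaccuracies: hypotheses 1.1 and 3.1 give that $\Br X_{\bar{\eta}}$ and $\Br B_{\bar{k}}$ are \emph{finite}, not zero (the transcendental part $\H^{3}(\cdot,\Z)_{\tor}$ may survive), which is why the paper needs the full comparison of Lemma \ref{lemBr} via the Leray spectral sequence and specialization of both $\Pic$ and $\Br$ of geometric fibres, rather than a purely algebraic vertical/horizontal decomposition of $\Br X$. And the transfer of orthogonality from $\Br X'$ to the chosen fibre is not automatic: the representatives $\Gamma_{X}\subset\Br X_{\eta}$ need not extend to $\Br X$, so Harari's formal lemma (Lemma \ref{formal-lemma}) is required to arrange $\sum_{v\in S}\inv_{v}(\langle z_{v},b_{X}\rangle_{v})=0$ after enlarging $S$; your sketch omits this step. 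The final upgrade to the exactness of $(\E)$ under hypothesis 4 is, as you say, handled by citing \cite[Prop. 3.1]{Wittenberg}, but note the paper applies it to the fibration $X'\to\P^{1}$, which is another reason the auxiliary $\P^{1}$ is built in from the start.
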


\subsection{Several remarks on the theorems}

\begin{rem}
The analogue, in the context of rational points, of Theorem \ref{mainThm-geo-int} is just the basic idea of the fibration method.
Theorem \ref{mainThm-section} is an analogue of the theorem for rational points \cite[Thm. 3]{HarariSMF}. If furthermore the base $B$ is supposed to satisfy weak approximation, we also refer to \cite[Thm. 4.3.1]{Harari} which is the relatively easy case in the paper \cite{Harari}. Similar to the hypothesis 1.2, there the generic fibre $X_{\eta}$ is supposed to possess a $k(B)$-rational point.
But one does not need the hypothesis 3.1, neither the finiteness of $\Br B/\Br k$.
\end{rem}

\begin{rem}\label{X=B}
In the particular case where $X=B$ and $f:B\to B$ is the identity morphism, both theorems reduce to \cite[Thm. 2.1]{Liang5} \textemdash~ a stronger version of the main result of \cite{Liang4}.
\end{rem}

\begin{rem}\label{remRC}
Let $V$ be a variety defined over algebraically closed field of characteristic $0$. If $\H^{1}(V,\mathcal{O}_{V})=0$, $\H^{2}(V,\mathcal{O}_{V})=0$ and if the N\'eron-Severi group $\textup{NS}(V)$ is torsion-free, then $\Pic V$ is torsion-free and $\Br V$ is finite, in particular it is the case if $V$ is a rationally connected variety. Moreover if the base $B$ and the generic fibre $X_{\eta}$ are both geometrically rationally connected, then so is $X$ according to \cite[Cor. 1.3]{GHS},  hypotheses 1.1, 3.1, and 4, are automatically satisfied.
\end{rem}

\begin{rem}
In both of the theorems, the arithmetic hypothesis concerning the Brauer\textendash Manin obstruction of the  base $B$ is a property of rational points, it is not clear whether it may  be replaced by the exactness of $(\E)$ for $B$ (which is a consequence of the property of rational points  by \cite[Thm. 2.1]{Liang5}). However, the arithmetic hypothesis on fibres concerns the Hasse principle or weak approximation for 0-cycles of degree $1$, which may be valid even if the similar question for rational points is still open (or solved conditionally).
\end{rem}

\begin{rem}
In the theorems, we suppose that certain arithmetic properties are satisfied for extensions $K\in\mathcal{K}_{L}$ instead of for all finite extensions, this may be useful in practice. As an example, we consider a proper smooth model $B$ of the affine variety defined by the equation 
$$N_{k'|k}(\textup{x})=Q(t),$$
where $Q(t)\in k[t]$ is a quadratic irreducible polynomial and $k'$ is a field extension of $k$ of degree $4$ splitting $Q(t)$. One may assume that $k'$ and $k[t]/(Q(t))$ are subfields of a fixed algebraic closure of $k$. Let $L$ be the composition of these two subfields. 
According to \cite[Th. 1]{DSW}, for any $K\in\mathcal{K}_{L}$ the Brauer\textendash Manin obstruction is the only obstruction to weak approximation for rational points on $B_{K}$.

Consider  Ch\^atelet surfaces defined by $x_{1}^{2}-ax_{2}^{2}=P(t)$ with $P(t)\in k[t]$ an irreducible polynomial of degree $4$,  then the Brauer\textendash Manin obstruction is the only obstruction to weak approximation for 0-cycles of degree $1$ by \cite[Thm. 0.4, 0.5]{Frossard}, and its Brauer group is trivial modulo constant by \cite[Prop. 1]{Sansuc82ChateletBr0}. This property is preserved after a finite extension of $k$ linearly disjoint from $k[t]/(P(t))$. If  fibres of a given fibration are such surfaces, then the  hypothesis 2 of Theorem \ref{mainThm-geo-int}, with the restriction to linear disjointness or to Hilbertian subsets,  is satisfied.
Similar restrictions of the hypothesis 2 of Theorem \ref{mainThm-section} may also be useful if fibres of a fibration are varieties considered in \cite[Cor. 3.3, Thm. 4.1]{Wei}.

\end{rem}

\begin{rem}
Theorem \ref{mainThm-section} has a variant concerning the algebraic part of Brauer groups:
\begin{itemize}
\item[-]
we can weaken the geometric hypothesis 1.1, suppose only that $\textup{NS}X_{\bar{\eta}}$ is torsion-free and $\H^{1}(X_{\bar{\eta}},\mathcal{O}_{X_{\bar{\eta}}})=0$ (equivalently $\Pic X_{\bar{\eta}}$ is torsion-free);
\item[-]
we have to strengthen the arithmetic hypothesis 2, suppose that the \emph{algebraic} Brauer\textendash Manin obstruction is the only obstruction for the fibre $X_{K\theta}=f_{K}^{-1}(\theta)$ mentioned above;
\end{itemize}
then the conclusion stays the same. For the proof, we only need to replace the forthcoming Lemma \ref{lemBr} by the comparison of the algebraic part of Brauer groups which is even easier.
 The similar comment applies to the base $B$, or applies also to both the fibres and the base simultaneously.
\end{rem}

\begin{rem}\label{rmksplitness}
Theorem \ref{mainThm-geo-int} remains valid if the geometric hypothesis 1  is replaced by the following arithmetic hypothesis
\begin{enumerate}
\item[$1'$,] There exists a finite set $S$ of places of $k$ and a non-empty open set $B^{0}$ of $B$, such that  for any closed point $\theta$ of $B^{0}$ the fibre $X_{\theta}$ \textemdash~ a $k(\theta)$-variety \textemdash~possesses a local 0-cycle of degree $1$ over $k(\theta)_{w}$ for any place $w$ of $k(\theta)$ lying above a certain place $v\notin S$.
\end{enumerate}
It is clear that $1'$ is automatically satisfied under one of the following conditions.
\begin{enumerate}
\item The generic fibre $X_{\eta}$ of $f$ possesses a 0-cycle of degree $1$.
\item There exists a finite set $S$ of places of $k$ such that for every $v\notin S$ and for every finite extension $F$ of $k_{v}$ the mapping $X(F)\to B(F)$ is surjective.
\end{enumerate}
In his paper \cite{CTfibresp}, Colliot-Th\'el\`ene conjectured that $f:X(k_{v})\to B(k_{v})$ is surjective for almost all places $v$ of $k$ if the fibration $f$ is split in codimension  $1$. The special case where $B$ is $\P^{1}$ is well-known, \cite[pp. 208-209]{Skorobogatov} \cite[Lem. 2.3]{Sk2}. Denef gave two proofs of the conjecture for general case in his preprint \cite{Denef},  one is an algebraic geometric proof and the other uses mathematical logic. The main result of \cite{Denef} is stated for varieties over the field $k=\mathbb{Q}$. As is remarked in the paper,  same proofs are valid for any number field $k$.
In the geometric proof, a finite set $S$ of exceptional places is constructed from and depends only on the geometric data $(X,B,f)$ over $k$. The surjectivity of $X(k_{v})\to B(k_{v})$ for $v\notin S$ comes essentially from the Lang\textendash Weil estimate and Hensel's lemma  via a diophantine purity theorem with sophisticated arguments. If one checks every step of the proof, with the same finite set $S$, for $v\notin S$ his argument also proves the surjectivity of $X(F)\to B(F)$ for any finite extension $F$ of $k_{v}$, \textit{i.e.} the condition (2). Hence the arithmetic hypothesis $1'$ is a consequence of the geometric hypothesis 1.

We will give a complete proof of Theorem \ref{mainThm-section} in \S\ref{proofsection}. The proof of Theorem \ref{mainThm-geo-int} is easier and it shares a  large part in common with the proof of Theorem \ref{mainThm-section}, a sketch (with hypothesis 1 replaced by hypothesis $1'$ or (2) ) will be given after the  proof of Theorem \ref{mainThm-section}.
\end{rem}

\subsection{Applications}\label{appl-subsection}

In this section, we give several applications of Theorems \ref{mainThm-geo-int} and \ref{mainThm-section}. All varieties given here are geometrically rationally connected, the hypotheses 1.1, 3.1 and 4 are automatically satisfied after Remark \ref{remRC}.

In both theorems, the base $B$ can be any smooth proper $k$-variety birationally equivalent to one of the varieties in the following list.

\begin{enumerate}
\item[($B_{1}$)] Homogeneous spaces of connected linear algebraic groups with connected stabilizers.
\item[($B_{2}$)] Homogeneous spaces of semi-simple simply connected algebraic groups with abelian stabilizers.
\begin{itemize} 
\item[-]In these first two cases, the base satisfies the hypothesis 3.2 by the work of Borovoi \cite{Borovoi96}.
\end{itemize}

\item[($B_{3}$)] Ch\^atelet surfaces.
\begin{itemize}
\item[-] The base satisfies the hypothesis 3.2 by the work of Colliot-Th\'el\`ene\textendash~Sansuc\textendash~Swinnerton-Dyer \cite[Thm. 8.11]{chateletsurfaces}.
\end{itemize}

\item[($B_{4}$)] Smooth intersections of two quadrics in $\P^{n}_{k(B)}(n\geq5)$ containing a conic defined over $k(B)$. 
\begin{itemize}
\item[-] We refer to the work of Harari \cite[Prop. 5.8]{Harari}, one may find other examples in the same paper.
\end{itemize}
\end{enumerate}

In Theorem \ref{mainThm-geo-int}, fibrations in the following list will satisfy the hypothesis 2. The hypothesis 1 is not easy to check, nevertheless for trivial fibrations as in Corollary \ref{trivialcor} it is automatically satisfied.
\begin{enumerate}
\item[($F_{1}$)] The fibration $X\to B$ is a Severi\textendash Brauer scheme over $B$.
\begin{itemize}
\item[-] All fibres are geometrically integral. The hypothesis 2 is satisfied by  class field theory.
\end{itemize}

\item[($F_{2}$)] The generic fibre of the fibration $X\to B$ is a Ch\^atelet $p$-fold which has an irreducible defining polynomial, with the hypothesis 1 assumed.
\begin{itemize}
\item[-]  The generic fibre is defined by the equation $N_{L|k(B)}(\textup{x})=P(t)$, where $L$ is a cyclic extension of $k(B)$ of prime degree $p$ and $P(t)\in k(B)[t]$ is an irreducible separable polynomial of degree $2p$. For closed points $\theta$ contained in a suitable generalized Hilbertian subset of $B$ corresponding to the finite extensions $L$ and $k(B)[t]/(P(t))$ of $k(B)$, the fibre $X_{\theta}$ is a Ch\^atelet $p$-fold with irreducible defining polynomial. By \cite[Thm. 3.2]{VAV-chatletp-fold}, the Brauer group $\Br X_{\theta}$ is trivial modulo constant. The Brauer\textendash Manin obstruction is the only obstruction to weak approximation for 0-cycles of degree $1$ on $X_{\theta}$, \cite[Thm. 4.1]{CT-Sk-SD}. Hence the hypothesis 2 is satisfied.
\end{itemize}
\end{enumerate}

In Theorem \ref{mainThm-section}, $X\to B$ can be a fibration whose generic fibre $X_{\eta}$ is a $k(B)$-variety birationally equivalent to one of the varieties in the following list. 
\begin{enumerate}
\item[($F_{3}$)] Connected linear algebraic groups over $k(B)$
\begin{itemize}
\item[-]The hypothesis 1.2 is automatically satisfied, and the hypothesis 2 is ensured by the work of Sansuc \cite{Sansuc}. 
\end{itemize}

\item[($F_{4}$)]  $k(B)$-varieties defined by the equation $N_{L|k(B)}(\textup{x})=P(t)$, where $P(t)\in k(B)[t]$ is a separable polynomial and $L$ is a finite cyclic extension of $k(B)$ of degree prime to  $\deg(P(t))$.
\begin{itemize}
\item[-] The assumption on the degree implies that the hypothesis 1.2 is satisfied. The hypothesis 2 is also satisfied by one of the main results in \cite[Thm. 4.1]{CT-Sk-SD}.
\end{itemize}

\item[($F_{5}$)] $k(B)$-varieties defined by the equation $N_{L|k(B)}(\textup{x})=P(t_{1},\ldots,t_{m})$, where $L$ is a finite extension of $k(B)$ of prime degree $p$ and $P(t_{1},\ldots,t_{m})\in k(B)[t_{1},\ldots,t_{m}]$ is a polynomial whose degree with respect to one of the $t_{i}$'s is prime to $p$.
\begin{itemize}
\item[-] The assumption on the degree implies that the hypothesis 1.2 is satisfied. The hypothesis 2 is also satisfied by \cite[Thm. 4.3]{Wei} and \cite[Ex. 3.1]{Liang5}.
\end{itemize}
\end{enumerate}

We have to remark that for the case  $(F_{3})$, one may also apply \cite[Thm. 3]{HarariSMF} combined with the main result of \cite{Liang4} to obtain the exactness of $(\E)$ for $X$. For the case $(F_{1})$, the result can also be deduced by \cite{Liang4}. For $(F_{2})$, $(F_{4})$ and $(F_{5})$, firstly one lacks  unconditional results on the arithmetic for rational points on closed fibres, secondly there may not be rational points on the generic fibre even if it possesses a 0-cycle of degree $1$. Theorems \ref{mainThm-geo-int} and \ref{mainThm-section} are hence crucial to obtain the exactness of $(\E)$. Very recently, Harpaz and Wittenberg prove the exactness of $(\E)$ for the total space of a fibration $X\to B$ without any restriction on the degenerate fibres, \cite{HarWit}. In their paper, the base $B$ can be a smooth projective curve $C$ with finite Tate\textendash Shafarevich group $\sha(\textup{Jac}(C))$, or $\P^{n}$, or $C\times\P^{n}$. It is not clear if the examples above can be covered by their result.

\section{Proofs of the theorems}\label{proofsection}

\subsection{Preliminaries for the proofs}

For the convenience of the reader, we list some well-known statements in this subsection, they will be  used in the proofs of the main results.

\begin{lem}[Moving lemma for 0-cycles]\label{movinglemma0}
Let $X$ be a smooth connected variety defined over an infinite perfect field $k$. Let $X_{0}$ be a nonempty open subvariety of $X$. Then every 0-cycle $z$ of $X$ is rationally equivalent to a 0-cycle $z'$ supported in $X_{0}$.
\end{lem}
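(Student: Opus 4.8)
The plan is to reduce to moving a single closed point and then to move it along a carefully chosen curve by a rational function with prescribed orders. Since $\CH_0(X)$ is generated by classes of closed points and rational equivalence is additive, it suffices to prove that every closed point $P\in X$ is rationally equivalent to a 0-cycle supported in $X_0$; if $P\in X_0$ there is nothing to do, so assume $P\in Y:=X\setminus X_0$. As $X$ is smooth and connected, hence irreducible, the open set $X_0$ is dense, so any affine open neighbourhood $U$ of $P$ satisfies $U\cap X_0\neq\emptyset$. I will build the curve inside such a $U$ but take its closure in $X$, so that the rational equivalence produced is genuinely computed on $X$.

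First I would construct a closed integral curve $C\subset X$ through $P$, smooth at $P$ and meeting $X_0$. If $\dim X=1$ the assertion is essentially immediate on $X$ itself, so assume $\dim X=n\geq 2$. Embedding the affine neighbourhood $U$ in some $\mathbb{A}^{N}$ and intersecting $U$ with $n-1$ sufficiently general hypersurfaces through $P$, a Bertini-type argument over the infinite perfect field $k$ produces an integral locally closed curve $C_{0}\subset U$ that is smooth at $P$ and meets $U\cap X_0$: here I use that $X$ is smooth at $P$ to control the general section at $P$, Bertini irreducibility to keep $C_0$ integral, and that $U\cap X_0$, being dense of dimension $n$, is met by a general section of complementary dimension. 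Let $C=\overline{C_{0}}$ be the closure of $C_0$ in $X$; it is a closed integral curve through $P$, smooth at $P$, with $C\cap X_0\neq\emptyset$.

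Next I would move $P$ along $C$. Let $\nu:\tilde C\to C\hookrightarrow X$ be the normalization, with $\tilde C$ a smooth connected curve (affine if $C$ is not proper, smooth projective if it is). Because $C\cap X_0\neq\emptyset$, the set $\nu^{-1}(C\cap Y)$ is finite; write it as $\{\tilde P,Q_1,\dots,Q_r\}$, where $\tilde P$ is the unique point above $P$, with $\nu_*[\tilde P]=[P]$ since $P$ is a smooth point of $C$. I then seek $g\in k(\tilde C)^{\times}$ with $\mathrm{ord}_{\tilde P}(g)=1$ and $\mathrm{ord}_{Q_i}(g)=0$ for all $i$. When $\tilde C$ is affine its coordinate ring is a Dedekind domain and such $g$ exists by independence of valuations (weak approximation) at the finitely many primes $\tilde P,Q_1,\dots,Q_r$; when $\tilde C$ is projective the same $g$ is produced by Riemann--Roch, taking a function with a simple zero at $\tilde P$, poles concentrated at one point of $\nu^{-1}(X_0)$, and avoiding the finitely many $Q_i$ (possible since $k$ is infinite). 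By construction $\mathrm{div}_{\tilde C}(g)=\tilde P+E$ with $E$ supported on $\nu^{-1}(X_0)$, so pushing forward $\mathrm{div}_{C}(g)=\nu_*\,\mathrm{div}_{\tilde C}(g)\sim 0$ in $\CH_0(X)$ yields $P\sim -\nu_*E$, a 0-cycle supported in $X_0$.

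I expect the main obstacle to be the curve construction: arranging simultaneously that $C$ is integral, smooth at $P$ (so that $\nu_*[\tilde P]=[P]$ with the correct multiplicity and residue field), and genuinely meets $X_0$, while only knowing that $X$ is an abstract smooth variety rather than a quasi-projective one. Passing to an affine neighbourhood and taking the closure in $X$ circumvents the absence of a global projective embedding, but one must still verify that the finitely many boundary points of $\tilde C$ lying over $X\setminus U$ are harmless --- they simply enlarge the finite set of bad points $Q_i$ already handled. The infinitude and perfectness of $k$ enter exactly where the general section and the interpolating function $g$ are produced.
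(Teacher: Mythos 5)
The paper gives no proof of its own here, simply citing \cite[Compl\'ement \S 3]{CT05}; your argument --- reduce to a single closed point $P$ outside $X_{0}$, cut an integral curve through $P$ that is smooth there and meets $X_{0}$ by a Bertini argument in an affine chart, normalize, and move $P$ by a rational function with a simple zero over $P$ and prescribed trivial orders at the finitely many other points lying over $X\setminus X_{0}$ --- is exactly that standard elementary proof and is correct. The only detail worth making explicit is that before invoking Bertini irreducibility you should replace $k$ by its algebraic closure in $k(X)$, which is harmless for rational equivalence and makes $X$ geometrically irreducible as required.
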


\begin{proof}
This is well-known, for an elementary proof see for example \cite[Compl\'ement \S 3]{CT05}.
\end{proof}

\begin{defn}
A 0-cycle $z$ written as $z=\sum n_{P}P$ with distinct closed points $P$ is said to be \emph{separable} if all nonzero multiplicities $n_{P}$ are either $1$ or $-1$.
\end{defn}

Effective separable 0-cycles on $\mathbb{A}^{1}_{k}$ correspond bijectively to separable polynomials over $k$ (up to scalar). Let $f:X\to Y$ be a proper morphism between algebraic varieties and let $z$ be an effective 0-cycle on $X$, then $f_{*}(z)$ is a separable 0-cycle on $Y$ implies that $z$ is separable.

\begin{defn}
Let $d$ be a positive integer and $X$ be a quasi-projective  variety defined over a topological field $k$. Let $\textup{Sym}^{d}X$ be the $d$-th symmetric product of $X$. Effective 0-cycles $z$ and $z'$ of degree $d$ on $X$ correspond to rational points $[z],[z']\in\textup{Sym}^{d}(X)(k)$. One says that $z$ is \emph{sufficiently close} to $z'$ if $[z]$ is sufficiently close to $[z']$ with respect to the topology on $\textup{Sym}^{d}(X)(k)$ induced by $k$.
\end{defn}

For a fixed integer $n>0$, if $k$ is $\mathbb{R},$ $\mathbb{C}$ or any finite extension of $\mathbb{Q}_p$,  the map $\textup{Sym}^{d}(X)(k)\to \CH_{0}(X)/n$ associating an effective  0-cycle to its class is locally constant, \emph{cf.} \cite[Lem. 1.8]{Wittenberg}. In other words $z$ and $z'$ have the same image in $\CH_{0}(X)/n$ if $z$ is sufficiently close to $z'$.

When $X=\P^{1}$ and $k$ is $\mathbb{R}$, $\mathbb{C}$ or a finite extension of $\mathbb{Q}_{p}$, the situation can be written explicitly as follows. Let $z'=\sum P'$  (with $P'$ distinct closed points) be an effective separable 0-cycle and let $z$ be an effective 0-cycle. They are contained in a certain open subset isomorphic to $\mathbb{A}^{1}$ and they are defined respectively by  monic polynomials $f'$ and $f\in k[T]$. We fix an algebraic closure $\bar{k}$ and an embedding  $k(P')\hookrightarrow\bar{k}$ for each $P'$, we can view $P'$ as a $k(P')$-rational point of $\P^{1}$. The 0-cycle $z$ is sufficiently close to $z'$ if and only if  $f$ is sufficiently close to $f'$ with respect to the product topology on $k[T]$ induced by $k$. If it is the case, the 0-cycle $z$ is also separable, therefore $z$ can be written as a sum $\sum P$ of distinct closed points of $\mathbb{A}^{1}$. It is clear if $k$ is archimedean, and it follows from Krasner's lemma if $k$ is non-archimedean, that each $P$ appearing in the support of $z$ corresponds to a unique $P'$ appearing in the support of $z'$ such that they have the same residual field $k(P)=k(P')$ and such that $P$ is sufficiently close to $P'$ as a $k(P')$-rational point. This will allow us to apply the implicit function theorem to fibrations over $\P^{1}$. This will also allow us to apply the continuity of the evaluation of a certain fixed Brauer element at rational points to obtain equalities on its evaluations at sufficiently close effective separable 0-cycles.

\begin{lem}[Relative moving lemma]\label{moving lemma}
Let $\pi:X\to\mathbb{P}^1$ be a dominant morphism between algebraic varieties defined over
$\mathbb{R},$ $\mathbb{C}$ or any finite extension of $\mathbb{Q}_p.$
Suppose that $X$ is smooth.

Then for any effective 0-cycle $z'$ on $X,$ there exists an effective 0-cycle $z$ on $X$
such that $\pi_*(z)$ is separable and
such that $z$ is sufficiently close to $z'.$
\end{lem}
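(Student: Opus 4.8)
The plan is to produce $z$ by perturbing each point of $z'$ inside the local analytic manifold attached to its residue field, exploiting that $\pi$ is a submersion on a dense open set. Write $z'=\sum_i m_i P_i'$ with the $P_i'$ distinct closed points, set $L_i=k(P_i')$ and $d=\deg z'=\sum_i m_i[L_i:k]$. I would first record the relevant computation of the pushforward: for a closed point $P$ with image $Q=\pi(P)$ one has $\pi_*[P]=[k(P):k(Q)]\,[Q]$, so for an effective cycle $z=\sum_a P_a$ the cycle $\pi_*(z)$ is separable \emph{exactly} when the images $\pi(P_a)$ are pairwise distinct and $k(P_a)=k(\pi(P_a))$ for every $a$. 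This reformulates the target as: find $z$ close to $z'$ whose support consists of multiplicity-one points with pairwise distinct images on which $\pi$ is residually trivial. (I may assume $X$ integral; since $X$ is smooth its connected components are disjoint and this is the essential case, realized in all the applications.)

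First I would pass to the smooth locus. As $k$ has characteristic $0$, generic smoothness provides a dense Zariski-open $U\subseteq X$ on which $\pi$ is smooth. For a finite extension $L/k$ the set $X(L)$ is an $L$-analytic manifold, and $X(L)\setminus U(L)$ is a nowhere-dense closed analytic subset, so $U(L)$ is dense in $X(L)$. Moreover, because $\pi|_U$ is smooth, the induced map $U(L)\to\mathbb{P}^1(L)$ is a submersion of $L$-analytic manifolds, hence by the implicit function theorem an \emph{open} map near every point.

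Now I would perturb one orbit at a time. Fix $i$ and regard $P_i'$ as an $L_i$-point of $X$ for a fixed embedding $L_i\hookrightarrow\bar k$. The locus of $Q\in\mathbb{P}^1(L_i)$ with residue field strictly smaller than $L_i$ is the union $\bigcup_{L'\subsetneq L_i}\mathbb{P}^1(L')$ over the intermediate fields (finite in number in characteristic $0$ by the primitive element theorem), hence nowhere dense; thus the $Q$ with $k(Q)=L_i$ are dense in $\mathbb{P}^1(L_i)$. Using openness of $\pi$ on $U(L_i)$ together with density of $U(L_i)$, I can choose, arbitrarily close to $P_i'$, exactly $m_i$ distinct points $P_{i,1},\dots,P_{i,m_i}\in U(L_i)$ whose images $Q_{i,j}=\pi(P_{i,j})$ are pairwise distinct with $k(Q_{i,j})=L_i$. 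Since $k(Q_{i,j})\subseteq k(P_{i,j})\subseteq L_i$, this forces $k(P_{i,j})=L_i=k(Q_{i,j})$, so $\pi$ is residually trivial at each $P_{i,j}$. Finally, selecting the finitely many clusters so that all the images $Q_{i,j}$ (over all $i,j$) are pairwise distinct, again a generic and hence achievable condition, I set $z=\sum_{i,j}[P_{i,j}]$. Each cluster $\sum_j[P_{i,j}]$ has degree $m_i[L_i:k]$ and its $\bar k$-points lie near the $\bar k$-points of $m_iP_i'$, so $[z]$ is as close as desired to $[z']$ in $\textup{Sym}^d(X)(k)$; and by the reformulation above $\pi_*(z)$ is separable.

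I expect the main obstacle to be the simultaneous arrangement of the three genericity conditions, namely multiplicity one, distinct images, and residual triviality $k(P_{i,j})=k(Q_{i,j})$, while remaining close to $z'$. This is precisely where submersivity, i.e. the openness of $\pi$ on $L_i$-points, is indispensable: at a critical point of $\pi$ one cannot force the image to acquire the full residue field without moving away from $P_i'$. The remaining bookkeeping, identifying the analytic closeness of the $L_i$-clusters with closeness in the symmetric product, is routine given the description of the topology on $\textup{Sym}^d(X)(k)$ recalled before the statement.
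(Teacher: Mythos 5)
Your argument is correct and is essentially the proof the paper has in mind: the paper's own ``proof'' just invokes the implicit function theorem and refers to Colliot-Th\'el\`ene\textendash Skorobogatov\textendash Swinnerton-Dyer for details, and your submersion-plus-density argument (openness of $\pi$ on the $L_i$-analytic manifold $U(L_i)$, then avoiding the finitely many proper subfields of $L_i$ and the previously chosen images) is exactly that standard argument written out. Your parenthetical reduction to integral $X$ is the right reading of the statement, since the lemma is only applied to (and is only true for) varieties all of whose components dominate $\mathbb{P}^1$.
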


\begin{proof}
Essentially, the statement follows from the implicit function theorem. We find detailed arguments
in \cite[p.19]{CT-Sk-SD} and \cite[p.89]{CT-SD}.
\end{proof}

\begin{lem}[Hilbert's irreducibility theorem for 0-cycles]\label{Hilbert_irred}
Let $S$ be a nonempty finite subset of places of a number field $k$.
Let $\Hil\subset \mathbb{A}^{1}=\P^{1}\setminus\infty$ be a generalized Hilbertian subset of $\P^{1}_{k}$.
For each $v\in S$, let $z_{v}$ be a separable effective 0-cycle of degree $d>0$ with support contained in $\mathbb{A}^{1}$.

Then there exists a closed point $\lambda$ of $\P^{1}$ such that
\begin{itemize}
\item[-] $\lambda\in\Hil$;
\item[-] as a 0-cycle $\lambda$ is sufficiently close to $z_{v}$ for any $v\in S$.
\end{itemize}
\end{lem}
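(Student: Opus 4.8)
The plan is to translate the statement into a classical Hilbert irreducibility statement with weak approximation on an affine space, by means of the symmetric product. A separable effective $0$-cycle of degree $d$ on $\mathbb{A}^{1}$ is the same thing as a monic separable polynomial of degree $d$, that is, a point of the open subvariety $P\subset\mathbb{A}^{d}=\textup{Sym}^{d}(\mathbb{A}^{1})$ cut out by the non-vanishing of the discriminant, where $\mathbb{A}^{d}$ has as coordinates the coefficients $(c_{1},\dots,c_{d})$ of $g(T)=T^{d}+c_{1}T^{d-1}+\dots+c_{d}$. A \emph{single} closed point $\lambda$ of $\P^{1}$ of degree $d$ corresponds precisely to an \emph{irreducible} such $g$, and the topology on $\textup{Sym}^{d}(\mathbb{A}^{1})(k_{v})$ used to define ``sufficiently close'' is exactly the topology on the coefficient vectors. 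Hence it suffices to produce a point $p\in P(k)$, $v$-adically close to the coefficient vectors of the $z_{v}$, whose polynomial $g_{p}$ is irreducible and whose root $\lambda=V(g_{p})$ lies in $\Hil$; closeness of coefficients then yields closeness of $0$-cycles by the explicit $\P^{1}$ discussion (via Krasner's lemma) recalled above.

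The core step is to encode both conditions --- irreducibility of $g_{p}$ and membership of $\lambda$ in $\Hil$ --- in a single finite \'etale cover over $P$. Writing $\rho\colon Z\to U\subset\P^{1}$ for the cover defining $\Hil$, I would introduce the universal root cover $\mathcal{Z}_{1}=\{(T_{0},p):g_{p}(T_{0})=0\}\to P$: it is finite \'etale of degree $d$, it is integral (indeed $\mathcal{Z}_{1}$ is isomorphic to an open subvariety of $\mathbb{A}^{d}$ with coordinates $(T_{0},c_{1},\dots,c_{d-1})$), and the root map $r\colon\mathcal{Z}_{1}\to\P^{1}$, $(T_{0},p)\mapsto T_{0}$, has geometrically irreducible generic fibre $\cong\mathbb{A}^{d-1}$. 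Over the open locus $P'\subset P$ where all roots of $g_{p}$ lie in $U$ (a further non-vanishing-of-resultant condition), I form $\mathcal{Z}_{2}=\mathcal{Z}_{1}\times_{\P^{1}}Z\to P'$, which is finite \'etale. Since $r$ has geometrically irreducible generic fibre and $Z$ is integral and dominant over $\P^{1}$, base change preserves irreducibility, so $\mathcal{Z}_{2}$ is integral and $\mathcal{Z}_{2}\to P'$ defines a generalized Hilbertian subset $\Hil'$ of $\mathbb{A}^{d}$. The key dictionary is that, for $p\in P'(k)$, the fibre $(\mathcal{Z}_{2})_{p}=\Spec(k[T]/g_{p})\times_{\P^{1}}Z$ is connected if and only if $g_{p}$ is irreducible (so that $\Spec(k[T]/g_{p})$ is the single closed point $\lambda$) \emph{and} $\rho^{-1}(\lambda)$ is connected: a reducible $g_{p}$ makes $\Spec(k[T]/g_{p})$, hence $(\mathcal{Z}_{2})_{p}$, disconnected, while for irreducible $g_{p}$ one has $(\mathcal{Z}_{2})_{p}=\rho^{-1}(\lambda)$. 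Thus $p\in\Hil'$ says exactly that $\lambda=V(g_{p})$ is a single degree-$d$ closed point lying in $\Hil$.

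To conclude I would invoke the classical theorem. For each $v\in S$, first perturb $z_{v}$ to a separable effective $0$-cycle $z_{v}'$ of degree $d$ with support inside $U\cap\mathbb{A}^{1}$ and arbitrarily close to $z_{v}$ (possible since $\P^{1}\setminus U$ is a finite set of closed points and such cycles are dense); its coefficient vector furnishes a local point $p_{v}'\in P'(k_{v})$. Applying the classical Hilbert irreducibility theorem in its form compatible with weak approximation on the affine space $\mathbb{A}^{d}$, to the generalized Hilbertian subset $\Hil'$ and the local data $(p_{v}')_{v\in S}$, I obtain $p\in\Hil'$ arbitrarily close to $p_{v}'$ for all $v\in S$. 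Then $\lambda=V(g_{p})$ is a single closed point of $\P^{1}$ lying in $\Hil$, and closeness of coefficients forces $\lambda$ to be sufficiently close to $z_{v}'$, hence to $z_{v}$, which is what we want.

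The main obstacle is the integrality of the combined cover $\mathcal{Z}_{2}$, which rests on the observation that the root map $r$ has geometrically irreducible fibres so that base change along $Z$ preserves irreducibility, together with the precise connectedness dictionary; both become clean once the $\textup{Sym}^{d}$ setup is in place. The only external input is the classical Hilbert irreducibility theorem with weak approximation for affine space, which is standard, and the remaining work is the bookkeeping that reduces the degree-$d$ $0$-cycle problem to a rational-point problem on $\textup{Sym}^{d}(\mathbb{A}^{1})=\mathbb{A}^{d}$.
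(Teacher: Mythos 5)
Your argument is correct and essentially self-contained; the paper gives no proof of this lemma beyond citing Lemma 3.4 of \cite{Liang1}, and your reduction via $\textup{Sym}^{d}(\mathbb{A}^{1})=\mathbb{A}^{d}$ --- encoding ``$g_{p}$ irreducible and $V(g_{p})\in\Hil$'' as connectedness of the fibre of a single finite \'etale cover $\mathcal{Z}_{2}\to P'$, then invoking classical Hilbert irreducibility compatible with weak approximation on affine space --- is precisely the standard route taken there. The only points worth making explicit are that the direction ``$g_{p}$ reducible $\Rightarrow(\mathcal{Z}_{2})_{p}$ disconnected'' uses surjectivity of $Z\to U$ (automatic for a nonempty finite \'etale cover of a connected base), and that integrality of $\mathcal{Z}_{2}$ follows because $r$ is smooth, so every irreducible component of $\mathcal{Z}_{2}$ dominates $Z$ and geometric irreducibility of the generic fibre of $r$ suffices.
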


\begin{proof}
It is a particular case of \cite[Lem. 3.4]{Liang1}.
\end{proof}

\begin{lem}\label{shortexactseq}
Let $X$ be a variety defined over a field $k$ of characteristic $0$. Denote by $\overline{X}$ the base change $X\times_{\Spec(k)}\Spec(\bar{k})$.
Suppose that $\H^{0}(\overline{X},\mathbb{G}_{m})=\bar{k}^{*}$ and that $\H^{3}(k,\mathbb{G}_{m})\to\H^{3}(X,\G_{m})$ is injective. Then there exists an exact sequence
$$0\to\H^{1}(k,\Pic\overline{X})\to\Br X/\Br k\to(\Br\overline{X})^{\Gal_{\bar{k}|k}}\to\H^{2}(k,\Pic\overline{X})$$
\end{lem}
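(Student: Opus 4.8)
The plan is to read the sequence off from the low-degree terms of the Hochschild--Serre spectral sequence
$$E_2^{p,q}=\H^p(k,\H^q(\overline{X},\G_m))\Longrightarrow\H^{p+q}(X,\G_m),$$
attached to the Galois action of $\Gal_{\bar{k}|k}$ on $\overline{X}$. First I would identify the relevant $E_2$-terms. The hypothesis $\H^0(\overline{X},\G_m)=\bar{k}^*$ identifies the bottom row as $E_2^{p,0}=\H^p(k,\bar{k}^*)$, so in particular $E_2^{2,0}=\Br k$ and $E_2^{3,0}=\H^3(k,\G_m)$. The next rows are $E_2^{p,1}=\H^p(k,\Pic\overline{X})$ (from $\H^1(\overline{X},\G_m)=\Pic\overline{X}$) and $E_2^{0,2}=(\Br\overline{X})^{\Gal_{\bar{k}|k}}$ (from $\H^2(\overline{X},\G_m)=\Br\overline{X}$). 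The abutment in total degree $2$ is $\Br X=\H^2(X,\G_m)$, equipped with the decreasing filtration $0=F^3\subset F^2\subset F^1\subset F^0=\Br X$ whose graded pieces are $F^2=E_\infty^{2,0}$, $\ F^1/F^2=E_\infty^{1,1}$, $\ F^0/F^1=E_\infty^{0,2}$.

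The essential input is the second hypothesis. The edge homomorphism $\H^3(k,\G_m)=E_2^{3,0}\to\H^3(X,\G_m)$ is the composite of the successive surjections $E_2^{3,0}\twoheadrightarrow\cdots\twoheadrightarrow E_\infty^{3,0}$ with the inclusion $E_\infty^{3,0}\hookrightarrow\H^3(X,\G_m)$; hence its injectivity forces $E_2^{3,0}=E_\infty^{3,0}$, i.e.\ every differential with target on the line $(3,0)$ vanishes. In particular the differential $d_2\colon E_2^{1,1}\to E_2^{3,0}$ and the differential $d_3\colon E_3^{0,2}\to E_3^{3,0}$ are zero. From the vanishing of the first (the only incoming differential to $E_2^{1,1}$ comes from $E_2^{-1,2}=0$, and all higher ones land outside the first quadrant) I get $E_\infty^{1,1}=E_2^{1,1}=\H^1(k,\Pic\overline{X})$. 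From the vanishing of $d_3$, together with $E_3^{0,2}=\ker(d_2\colon E_2^{0,2}\to E_2^{2,1})$, I get $E_\infty^{0,2}=\ker\big((\Br\overline{X})^{\Gal_{\bar{k}|k}}\to\H^2(k,\Pic\overline{X})\big)$, the map here being exactly $d_2$.

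It then remains to assemble these identifications into the filtration of $\Br X$. Since $F^2=E_\infty^{2,0}$ is precisely the image of the edge map $\Br k\to\Br X$, one has $\Br X/\Br k=\Br X/F^2$, and the short exact sequence of filtration quotients $0\to F^1/F^2\to \Br X/F^2\to F^0/F^1\to 0$ reads
$$0\to\H^1(k,\Pic\overline{X})\to\Br X/\Br k\to\ker\big((\Br\overline{X})^{\Gal_{\bar{k}|k}}\to\H^2(k,\Pic\overline{X})\big)\to0,$$
where the middle map is induced by the restriction $\Br X\to(\Br\overline{X})^{\Gal_{\bar{k}|k}}$ (the edge map in bidegree $(0,2)$, with kernel $F^1$). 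Splicing in the inclusion of the kernel into $(\Br\overline{X})^{\Gal_{\bar{k}|k}}$ followed by the map $d_2$ to $\H^2(k,\Pic\overline{X})$ yields the asserted four-term exact sequence.

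The one point that repays care is the exact role of hypothesis~2, and this is where I expect the main (if modest) obstacle to lie. Without it one only obtains $E_\infty^{1,1}=\ker\big(d_2\colon\H^1(k,\Pic\overline{X})\to\H^3(k,\G_m)\big)$, so the left-hand term of the sequence would have to be replaced by this proper subgroup and a correction term $\H^3(k,\G_m)$ would intervene (the usual seven-term sequence). The injectivity of $\H^3(k,\G_m)\to\H^3(X,\G_m)$ is precisely what kills the differentials $d_2\colon E_2^{1,1}\to E_2^{3,0}$ and $d_3\colon E_3^{0,2}\to E_3^{3,0}$, collapsing the seven-term sequence to the clean four-term form; everything else is the formal low-degree bookkeeping of the spectral sequence.
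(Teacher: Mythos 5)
Your proof is correct and follows essentially the same route as the paper: both arguments run the Leray/Hochschild--Serre spectral sequence $\H^p(k,\H^q(\overline{X},\G_m))\Rightarrow\H^{p+q}(X,\G_m)$ and use the injectivity of $\H^3(k,\G_m)\to\H^3(X,\G_m)$, via the factorization $E_2^{3,0}\twoheadrightarrow E_3^{3,0}\twoheadrightarrow E_\infty^{3,0}\hookrightarrow\H^3(X,\G_m)$, to kill the differentials landing in position $(3,0)$ and assemble the four-term sequence from the filtration on $\Br X$. No gaps.
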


\begin{proof}
This is well-known, see for example \cite[Prop. 1.3]{CT-Sk-Crelle2013} and \cite[Prop. 2.2.1]{Harari2}. For the convenience of the reader, we give a proof with details as follows. We refer to  \cite[Appendix B]{MilneEC} for the notation.

Consider the Leray spectral sequence
$$\E_{2}^{p,q}=\H^{p}(k,\H^{q}(\overline{X},\mathbb{G}_{m}))\Rightarrow E^{p+q}=\H^{p+q}(X,\mathbb{G}_{m}).$$
We obtain the exact sequence
$$\Br k\to\ker[\Br X\to\Br\overline{X}]\to\H^{1}(k,\Pic\overline{X})\to\H^{3}(k,\G_{m}),$$
which extends to a \emph{complex}
$\cdots\to\H^{1}(k,\Pic\overline{X})\to\H^{3}(k,\G_{m})\to\H^{3}(X,\G_{m}).$
Even though it is not exact at $\H^{3}(k,\G_{m})$, the injectivity assumption implies the exactness of
$$\Br k\to\ker[\Br X\to\Br\overline{X}]\to\H^{1}(k,\Pic\overline{X})\to0,$$ whence the exactness of
$$0\to\H^{1}(k,\Pic\overline{X})\to\Br X/\Br k\to\Br\overline{X}.$$
Consider the filtration of $E^{2}$, $$E^{2}=\Br X\twoheadrightarrow E_{\infty}^{0,2}=E_{4}^{0,2}\subset E_{3}^{0,2}\subset E_{2}^{0,2}=(\Br\overline{X})^{\Gal_{\bar{k}|k}},$$ where by definition $E_{3}^{0,2}=\ker\left[d_{2}^{0,2}:E_{2}^{0,2}=(\Br\overline{X})^{\Gal_{\bar{k}|k}}\to E_{2}^{2,1}=\H^{2}(k,\Pic\overline{X})\right]$, it remains to show that $E_{4}^{0,2}$, the image of $\Br X$ in $(\Br\overline{X})^{\Gal_{\bar{k}|k}}$, equals to $E_{3}^{0,2}$. This is equivalent to the fact that $d_{3}^{0,2}:E_{3}^{0,2}\to E_{3}^{3,0}$ is the zero map, or even  $E_{3}^{3,0}\twoheadrightarrow E_{4}^{3,0}$ is an isomorphism. Consider the flitration of $E^{3}$, the composition
$$E_{2}^{3,0}=\H^{3}(k,\G_{m})\twoheadrightarrow E_{3}^{3,0}\twoheadrightarrow E_{4}^{3,0}=E_{\infty}^{3,0}\hookrightarrow E^{3}=\H^{3}(X,\G_{m})$$
is injective by assumption, which implies the desired equality $E_{4}^{0,2}=E_{3}^{0,2}$.
\end{proof}

\begin{lem}[Harari {\cite[Lem. 1]{HarariSMF}}]\label{Hilbert-for-rational-points}
Let $B$ be a smooth geometrically integral variety defined over a number field $k$ such that $B(k_{v})\neq\emptyset$ for all places $v\in\Omega_{k}$. Suppose that for $k$-rational points $B$ verifies weak approximation outside a certain finite set $T$ of places. Let $\mathsf{H}$ be a (classical) Hilbertian subset of  $B(k)$.
Let $S\subset\Omega_{k}$ be a finite subset of places and $\{P_{v}\}_{v\in S}$ be a family of local points of $B$ such that: for every finite subset $S'\subset\Omega_{k}\setminus(S\cup T)$ and arbitrary family $\{P'_{v}\}_{v\in S'}$ of local rational points, the union $\{Q_{v}\}_{v\in S\sqcup S'}$ of $\{P_{v}\}_{v\in S}$ and $\{P'_{v}\}_{v\in S'}$ is in the closure of the diagonal image of $B(k)$.

Then there exists a rational point of $B$ contained in $\mathsf{H}$  arbitrarily close to $P_{v}$ for all $v\in S$.
\end{lem}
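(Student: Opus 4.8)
The guiding idea is to convert membership of a rational point in the Hilbertian set $\mathsf{H}$ into a collection of \emph{local} conditions, imposed at finitely many auxiliary places disjoint from $S\cup T$, and then to invoke the hypothesis of the lemma, which is tailored precisely so that these extra local conditions can be prescribed at the same time as the approximation of the given $P_{v}$ at $v\in S$. Concretely, suppose one can produce finitely many places $w_{1},\dots,w_{r}\in\Omega_{k}\setminus(S\cup T)$ and local points $P'_{w_{j}}\in B(k_{w_{j}})$ with the following \emph{trapping property}: every $x\in B(k)$ that is $w_{j}$-adically close to $P'_{w_{j}}$ for all $j$ automatically lies in $\mathsf{H}$. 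Granting this, I would apply the hypothesis with $S'=\{w_{1},\dots,w_{r}\}$ and the family $\{P'_{w_{j}}\}$: it yields, for any prescribed accuracy, a point $x\in B(k)$ simultaneously close to $P_{v}$ at every $v\in S$ and close to $P'_{w_{j}}$ at every $w_{j}$. The former is the desired approximation and the latter forces $x\in\mathsf{H}$, so the lemma follows.

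First I would reduce to a single cover. Writing $\mathsf{H}$ as the set of $\theta$ for which the fibres of finitely many irreducible finite \'etale covers remain connected, one replaces these by one cover $\rho\colon W\to U\subseteq B$ with $W$ integral; membership $x\in\mathsf{H}$ then amounts to connectedness of the fibre $\rho^{-1}(x)$, equivalently to transitivity of the action of $\mathrm{Gal}(\bar{k}/k)$ on its $d=\deg\rho$ geometric points. This action lands in the Galois group $G$ of the Galois closure of $k(W)/k(B)$, a transitive subgroup $G\hookrightarrow\mathfrak{S}_{d}$, and the fibre is connected exactly when the image of the specialised representation at $x$ is a transitive subgroup of $\mathfrak{S}_{d}$.

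The construction of the trapping places is the heart of the matter and is the arithmetic incarnation of Hilbert's irreducibility theorem. Spreading $\rho$ out to a finite \'etale morphism over an open part of $\Spec\mathcal{O}_{k}$, I would use the Lang\textendash Weil estimates together with Chebotarev's density theorem to realise, at suitable places $w_{j}$ of good reduction, local points $P'_{w_{j}}\in B(k_{w_{j}})$ at which the Frobenius acts on the fibre through a prescribed element $\sigma_{j}\in G$; Hensel's lemma produces the $k_{w_{j}}$-point, and the decomposition type of the fibre is locally constant, so any $x$ close to $P'_{w_{j}}$ at $w_{j}$ inherits the same Frobenius. Choosing the $\sigma_{j}$ so that they generate a transitive subgroup of $\mathfrak{S}_{d}$ then forces the specialised monodromy image at such an $x$ to be transitive, i.e. $x\in\mathsf{H}$.

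The main obstacle is exactly this last construction: one must guarantee not merely that each $\sigma_{j}$ occurs as \emph{some} Frobenius conjugacy class, but that the actual Frobenius elements sitting inside the image at $x$ together generate a transitive group, which requires care with the conjugacy ambiguities and, when $G$ contains no single $d$-cycle, genuinely several auxiliary places; this is where the sharper forms of Hilbert irreducibility over local fields, in the style of Ekedahl, are needed. All the favourable places occur with positive density by Chebotarev, so they may be kept away from $S\cup T$ and from the bad-reduction and ramification loci; once the trapping family $(w_{j},P'_{w_{j}})$ is secured, the weak-approximation hypothesis of the lemma completes the proof with no further difficulty.
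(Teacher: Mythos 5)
The paper does not prove this lemma at all: it is quoted verbatim from Harari \cite[Lem.~1]{HarariSMF} and used as a black box, so there is no internal proof to compare against. Your reconstruction is essentially the argument of the cited source: encode membership in $\mathsf{H}$ as open local conditions at finitely many auxiliary places $w_{j}\notin S\cup T$ (Ekedahl's effective form of Hilbert irreducibility, via Lang--Weil, Chebotarev and Hensel), then feed $S'=\{w_{1},\dots,w_{r}\}$ into the approximation hypothesis. One point is imprecise as written: since the Frobenius at $w_{j}$ is only defined up to conjugacy in $G$, requiring the chosen $\sigma_{j}$ to ``generate a transitive subgroup'' is not a sufficient criterion --- the specialised group $G_{x}$ is only guaranteed to meet the conjugacy class of each $\sigma_{j}$, and a subgroup meeting those classes need not contain the $\sigma_{j}$ themselves. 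The standard repair, which you gesture at but do not carry out, is to take one auxiliary place for each conjugacy class of $G$ and invoke Jordan's lemma (a proper subgroup cannot meet every conjugacy class) to force $G_{x}=G$, hence transitivity on the fibre; with that fix the proof is complete and agrees with Harari's.
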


Suppose that the Brauer\textendash Manin obstruction is the only obstruction to weak approximation for rational points on $B$ and that $\Br B/\Br k$ is finite. Take a finite subset $T\subset\Omega_{k}$ large enough such that there exists a certain set of representatives of $\Br B/\Br k$ whose elements have good reduction outside $T$. If $\{P_{v}\}_{v\in\Omega_{k}}$ is orthogonal to $\Br B$, then for arbitrary finite subset $S\subset\Omega_{k}$ the family $\{P_{v}\}_{v\in S}$ satisfies automatically the assumption of the lemma.

\begin{lem}[Harari's formal lemma {\cite[Cor. 2.6.1]{Harari}} {\cite[Lem. 4.5]{CT-Sk-SD}}]\label{formal-lemma}
Let $X$ be a smooth proper geometrically integral variety defined over a number filed $k$. Let $X_{0}$ be a nonempty open subvariety of $X$ and $\Lambda\subset\Br X_{0}\subset \Br k(X)$ be a finite set of elements of the Brauer group. We note by $B$ the intersection in $\Br k(X)$ of $\Br X$ and the subgroup generated by $\Lambda$.

Let $\delta$ be an integer.  Suppose that for every $v\in\Omega_{k}$, there exists a 0-cycle $z_{v}$ on $X_{k_v}$ of degree $\delta$ supported in  $X_{0v}$ such that the family $\{z_{v}\}_{v\in\Omega_{k}}$ is orthogonal to $B$.

Then for every finite subset $S\subset\Omega_{k}$, there exist a finite subset $S'$ of places of $k$ containing $S$ and for each $v\in S'\setminus S$ a 0-cycle $z'_{v}$ on $X$ of degree $\delta$ and supported in $X_{0v}$ such that
$$\sum_{v\in S}\inv_{v}(\langle z_{v},b\rangle_{v})+\sum_{v\in S'\setminus S}\inv_{v}(\langle z'_{v},b\rangle_{v})=0$$
for all $b\in\Lambda$.
\end{lem}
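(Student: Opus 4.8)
The plan is to deduce the statement from the standard version of Harari's formal lemma for \emph{rational points} by reducing the case of 0-cycles to the case of points via a norm/restriction-of-scalars argument at each place. The key observation is that a local 0-cycle $z_v = \sum_i n_{v,i} P_{v,i}$ of degree $\delta$ supported in $X_{0v}$ is, up to the pairing with Brauer elements, controlled by the residue fields $k_v(P_{v,i})$ and the images of the closed points under the evaluation map; indeed, for $b \in \Br k(X)$ one has $\inv_v(\langle z_v, b\rangle_v) = \sum_i n_{v,i}\,\inv_v(\cores_{k_v(P_{v,i})|k_v}\, b(P_{v,i}))$, so the pairing is computed place-by-place and point-by-point through corestriction. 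I would first record that it suffices to produce, at a finite collection of auxiliary places $v \in S'\setminus S$, local 0-cycles $z'_v$ supported in $X_{0v}$ that kill the total invariant sum for each $b$ in the finite set $\Lambda$.

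First I would apply the classical rational-point formal lemma of Harari (\cite[Cor. 2.6.1]{Harari}) after passing to a closed point. More precisely, since each $z_v$ is a $\Z$-linear combination of closed points $P_{v,i}$ of $X_{0,k_v}$ with residue fields $F_{v,i}=k_v(P_{v,i})$, I would view each $P_{v,i}$ as an $F_{v,i}$-rational point of $X_{0}\otimes_k F_{v,i}$ and reinterpret the orthogonality hypothesis: the family $\{z_v\}$ being orthogonal to $B = \Br X \cap \langle\Lambda\rangle$ is the 0-cycle analogue of a family of local points being orthogonal to the relevant Brauer subgroup. The effect of the multiplicities $n_{v,i}$ and of the corestrictions is \emph{linear} in the invariants, so the sum condition $\sum_v \inv_v(\langle z_v, b\rangle_v)=0$ for $b\in B$ is exactly the hypothesis needed to feed into the rational-point statement once one keeps track of the closed-point/residue-field bookkeeping.

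The main step, and the one I expect to be the principal obstacle, is the passage from orthogonality with respect to $B=\Br X\cap\langle\Lambda\rangle$ (a subgroup possibly strictly smaller than $\langle\Lambda\rangle$) to the production of correcting cycles $z'_v$ that annihilate the invariants of \emph{all} of $\Lambda$, not merely of $B$. This is precisely the content of the ``formal'' part of the lemma: elements of $\Lambda \setminus B$ lie in $\Br X_0$ but not in $\Br X$, hence have nontrivial residues along the boundary $X\setminus X_0$, and one exploits the reciprocity law $\sum_v \inv_v(\langle z_v,b\rangle_v)=0$ valid for genuine $b\in\Br X$ together with the freedom to move the support of the cycles into $X_0$ at extra places $v\in S'\setminus S$ where $b$ takes prescribed nonzero local invariants. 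Concretely, I would use the non-triviality of the local evaluation of the ramified classes at suitable places (an application of Chebotarev/Hensel, exactly as in the rational-point proof) to realize arbitrary prescribed partial invariant sums by effective closed points supported in $X_{0v}$, and then invoke global reciprocity to force the total sum over all of $S'$ to vanish for every $b\in\Lambda$. The cycle version differs from the point version only in that the correcting contributions are built from closed points of possibly positive degree, which causes no difficulty since the construction at each auxiliary place is carried out after a finite field extension and pushed forward; the linearity of the invariant pairing in the cycle makes the bookkeeping formal. For this reason I would expect the proof to consist essentially of citing \cite[Cor. 2.6.1]{Harari} and \cite[Lem. 4.5]{CT-Sk-SD} and checking that their arguments, written for rational points, transport verbatim to 0-cycles through the corestriction formula above.
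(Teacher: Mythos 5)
The paper gives no proof of this lemma at all: it is listed among the preliminaries as a known result, quoted directly from the cited sources, and \cite[Lem.\ 4.5]{CT-Sk-SD} is already the $0$-cycle statement verbatim (the rational-point version being \cite[Cor.\ 2.6.1]{Harari}). So your conclusion --- that one ultimately just invokes these references --- matches what the paper does, and the skeleton you describe (linearity of the pairing through corestriction, the fact that classes of $\Lambda\setminus B$ are ramified along $X\setminus X_{0}$, auxiliary places where such ramified classes take prescribed non-zero local values, and reciprocity for the unramified part) is indeed the standard argument behind the cited lemma.

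However, the reduction you propose as the ``first step'' does not work as stated, and this is worth flagging. You cannot ``apply the classical rational-point formal lemma after passing to a closed point'': the residue fields $k_{v}(P_{v,i})$ are local fields varying with $v$ and $i$, and they do not assemble into a single number field over which $X_{0}$ would carry a global family of rational points; Harari's rational-point lemma has no such family to be fed. What \cite[Lem.\ 4.5]{CT-Sk-SD} actually does is redo the formal (character-theoretic, on the finite group $\langle\Lambda\rangle/B$) part of Harari's argument directly for $0$-cycles; the only non-formal input is Harari's purely local result that an element of $\Br X_{0}$ not in $\Br X$ takes a non-zero value at some $k_{v}$-rational point for infinitely many $v$ --- a statement about honest rational points, needing no corestriction bookkeeping. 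A second, smaller omission: your construction at the auxiliary places produces cycles with the right invariants but you never arrange that $z'_{v}$ has degree exactly $\delta$; one must pad the chosen rational point with a $0$-cycle of degree $\delta-1$ supported in $X_{0v}$ on which all elements of $\Lambda$ evaluate trivially (or argue similarly), and this deserves a sentence.
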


\subsection{Proofs of the main theorems}

In order to prove Theorem \ref{mainThm-section} we need to establish some lemmas on generalised Hilbertian subsets to compare Brauer groups.
If $k$ is a number field, one can deduce from Hilbert's irreducibility theorem that generalized Hilbertian subsets of geometrically integral varieties are always nonempty. Let $\Hil_{1}$ and $\Hil_{2}$ be generalized Hilbertian subsets of an integral variety, then there exists  a generalized Hilbertian subset $\Hil$ contained in $\Hil_{1}\cap\Hil_{2}$.

\begin{lem}\label{lemHil}
Let $B$ be a geometrically integral variety defined over $k$. Let $\Hil_{B}$ be a generalized Hilbertian subset of $B$. Consider the projections $pr_{B}:B\times \P^{1}\to B$ and $pr_{\P^{1}}:B\times \P^{1}\to \P^{1}$. Then there exists a generalized Hilbertian subset $\Hil_{\P^{1}}$ of $\P^{1}$ satisfying the following property.

\begin{itemize}
\item For each $\lambda\in\Hil_{\P^{1}}$, the $k(\lambda)$-variety $pr_{\P^{1}}^{-1}(\lambda)=B\times\lambda\simeq B_{k(\lambda)}$ contains a generalized Hilbertian subset $\Hil_{\lambda}$ such that the image $pr_{B}(\Hil_{\lambda})\subset B$ is contained in $\Hil_{B}.$
\end{itemize}
\end{lem}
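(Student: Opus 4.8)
The plan is to transport the datum $\rho\colon Z\to U\subset B$ defining $\Hil_{B}$ to $\P^{1}$ by base change, and to read off $\Hil_{\P^{1}}$ from the field of constants of $Z$. Write $\Hil_{B}=\{\theta\in U\mid \rho^{-1}(\theta)\text{ connected}\}$ for a finite \'etale morphism $\rho\colon Z\to U$ with $Z$ integral and $U\subset B$ a nonempty open. Let $\tilde{k}$ denote the algebraic closure of $k$ inside the function field $k(Z)$; since $\mathrm{char}\,k=0$, the extension $\tilde{k}/k$ is finite separable and $Z$ is geometrically integral as a $\tilde{k}$-variety.

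First I would define $\Hil_{\P^{1}}$. The morphism $\P^{1}_{\tilde{k}}=\P^{1}\times_{k}\Spec\tilde{k}\to\P^{1}$ is finite \'etale with $\P^{1}_{\tilde{k}}$ integral, so it defines a generalized Hilbertian subset
$$\Hil_{\P^{1}}=\{\lambda\in\P^{1}\mid (\P^{1}_{\tilde{k}})_{\lambda}=\Spec(\tilde{k}\otimes_{k}k(\lambda))\text{ is connected}\}=\{\lambda\mid \tilde{k}\otimes_{k}k(\lambda)\text{ is a field}\}.$$
(If $\tilde{k}=k$ this is all of $\P^{1}$.) For such $\lambda$, decomposing $Z\times_{k}k(\lambda)=Z\times_{\tilde{k}}(\tilde{k}\otimes_{k}k(\lambda))$ into its geometrically integral $\tilde{k}$-components shows that $Z_{k(\lambda)}=Z\times_{k}k(\lambda)$ is integral exactly when $\tilde{k}\otimes_{k}k(\lambda)$ is a field, i.e. exactly for $\lambda\in\Hil_{\P^{1}}$.

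Next, for $\lambda\in\Hil_{\P^{1}}$ I would set $\Hil_{\lambda}$ to be the generalized Hilbertian subset of $B_{k(\lambda)}\simeq pr_{\P^{1}}^{-1}(\lambda)$ attached to the base-changed finite \'etale morphism $\rho_{k(\lambda)}\colon Z_{k(\lambda)}\to U_{k(\lambda)}$; this is legitimate precisely because $Z_{k(\lambda)}$ is integral by the previous step, while $U_{k(\lambda)}$ is a nonempty open of $B_{k(\lambda)}$ and finite \'etaleness is preserved by base change. The remaining and central point is the inclusion $pr_{B}(\Hil_{\lambda})\subset\Hil_{B}$. Since $\Spec k(\lambda)\to\Spec k$ is finite, $pr_{B}\colon B_{k(\lambda)}\to B$ is finite and sends a closed point $\theta\in\Hil_{\lambda}$ to a closed point $\theta'=pr_{B}(\theta)\in U$, with $k(\theta')\subset k(\theta)$. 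A fibre-product computation identifies
$$\rho_{k(\lambda)}^{-1}(\theta)\;=\;Z\times_{U}\Spec k(\theta)\;=\;\rho^{-1}(\theta')\times_{k(\theta')}k(\theta),$$
so $\rho_{k(\lambda)}^{-1}(\theta)$ is a base change of $\rho^{-1}(\theta')$ along the field extension $k(\theta)/k(\theta')$. As base change along a field extension cannot connect a disconnected scheme, connectedness of $\rho_{k(\lambda)}^{-1}(\theta)$ forces connectedness of $\rho^{-1}(\theta')$; together with $\theta'\in U$ this yields $\theta'\in\Hil_{B}$, as required.

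The only genuinely delicate points are the two connectedness transfers: the descent of connectedness of $Z_{k(\lambda)}$ to the field-of-constants condition cutting out $\Hil_{\P^{1}}$ (handled by the geometrically integral decomposition over $\tilde{k}$), and the fibre-product identification above showing that connectedness of fibres descends under $pr_{B}$. Both are formal once the constant field $\tilde{k}$ is isolated, and no arithmetic input about $k$ beyond $\mathrm{char}\,k=0$ is needed.
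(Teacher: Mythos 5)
Your proof is correct and follows essentially the same route as the paper's: you take $\Hil_{\P^{1}}$ to be the Hilbertian subset cut out by $\P^{1}_{\tilde{k}}\to\P^{1}$ for $\tilde{k}$ the algebraic closure of $k$ in $k(Z)$, define $\Hil_{\lambda}$ by the base change $\rho_{k(\lambda)}$, and descend connectedness of fibres along $pr_{B}$. The only difference is that you make explicit (via the fibre-product identification $\rho_{k(\lambda)}^{-1}(\theta)=\rho^{-1}(\theta')\times_{k(\theta')}k(\theta)$) the final step that the paper leaves as ``we can show by contradiction.''
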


\begin{proof}
The generalized Hilbertian subset $\Hil_{B}$ is defined by a finite \'etale morphism $\rho:Z\to U\subset B$, where $U$ is a non-empty open subset of $B$ and $Z$ is an integral $k$-variety. The function field $k(Z)$ is a finitely generated field extension of $k$, let $k'$ be the algebraic closure of $k$ in $k(Z)$.
The finite \'etale morphism $\P^{1}\times_{\Spec(k)} \Spec(k')\to\P^{1}$ defines a generalized Hilbertian subset $\Hil_{\P^{1}}$ of $\P^{1}.$

For each $\lambda\in\Hil_{\P^{1}}$, the extension $k(\lambda)/k$ is linearly disjoint from $k'/k$, hence the base change $Z_{k(\lambda)}$ is still an integral variety over $k(\lambda)$. Let $\Hil_{\lambda}$ be the generalized Hilbertian subset defined by $\rho_{k(\lambda)}:Z_{k(\lambda)}\to U_{k(\lambda)}\subset B_{k(\lambda)}$. For each $\theta\in\Hil_{\lambda}$, the (connected) fibre $\rho_{k(\lambda)}^{-1}(\theta)$ is a closed point of $Z_{k(\lambda)}$. We can show by contradiction that the fibre of $\rho$ at the closed point $pr_{B}(\theta)$ of $B$ has also to be connected, which signifies that $pr_{B}(\theta)\in\Hil_{B}$ and we have $pr_{B}(\Hil_{\lambda})\subset\Hil_{B}.$
\end{proof}

\begin{lem}[Comparison of Brauer groups]\label{lemBr}
Let $f:X\to B$ be a dominant morphism between proper smooth geometrically integral varieties defined over   $k$.
Assume that the generic fibre $X_{\eta}$ possesses a 0-cycle of degree $1$. Suppose for the geometric generic fibre $X_{\bar{\eta}}$, that the N\'eron\textendash Severi group $\textup{NS}X_{\bar{\eta}}$ is torsion-free, and $\H^{1}(X_{\bar{\eta}},\mathcal{O}_{X_{\bar{\eta}}})=0$, $\H^{2}(X_{\bar{\eta}},\mathcal{O}_{X_{\bar{\eta}}})=0$.

Then there exists a generalized Hilbertian subset $\Hil_{\P^{1}}$ of $\P^{1}$  satisfying the following property.

\begin{itemize}
\item For each $\lambda\in\Hil_{\P^{1}}$, the $k(\lambda)$-variety $pr_{\P^{1}}^{-1}(\lambda)=B\times\lambda\simeq B_{k(\lambda)}$ contains a generalized Hilbertian subset $\Hil_{\lambda}$ such that for each $k(\lambda)$-rational point $\theta$ contained in $\Hil_{\lambda}$ all homomorphisms in the following commutative diagram are isomorphisms
\LARGE
\SelectTips{eu}{12}$$\xymatrix@C=65pt @R=20pt{
\frac{\Br X_{\eta}}{\Br k(B)}\ar[r]^-{Sp_{X\to B,pr_{B}(\theta)}}\ar[d]&\frac{\Br X_{pr_{B}(\theta)}}{\Br k(pr_{B}(\theta))}\ar[d]
\\ \frac{\Br (X\times\lambda)_{\eta_{\lambda}}}{\Br k(\lambda)(B\times\lambda)}\simeq\frac{\Br (X_{k(\lambda)})_{\eta_{\lambda}}}{\Br k(\lambda)(B_{k(\lambda)})}\ar[r]^-{Sp_{X\times\lambda\to B\times\lambda,\theta}}&\frac{\Br(X\times\lambda)_{\theta}}{\Br k(\theta)}\simeq\frac{\Br X_{pr_{B}(\theta)}\otimes k(\theta)}{\Br k(\theta)}
},$$
\normalsize
where $\eta_{\lambda}$ is the generic point of $B\times\lambda$ and
where the horizontal homomorphisms are specializations and the vertical homomorphisms are base changes.
\end{itemize}
\end{lem}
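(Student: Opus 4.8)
The plan is to compute every group occurring in the diagram by means of Lemma \ref{shortexactseq}, thereby replacing a statement about Brauer groups by a statement about Galois cohomology of Picard lattices, and then to show that both the horizontal specialization maps and the vertical base-change maps become isomorphisms once the Galois action has been forced to factor through a finite extension that splits completely on a suitably chosen generalized Hilbertian subset. The two maps labelled $\simeq$ are tautological, coming from the identification $pr_{\P^{1}}^{-1}(\lambda)=B\times\lambda\simeq B_{k(\lambda)}$ and from $(X\times\lambda)_{\theta}=X_{pr_{B}(\theta)}\otimes_{k(pr_{B}(\theta))}k(\theta)$; the content is that the four genuine homomorphisms are isomorphisms.

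First I would apply Lemma \ref{shortexactseq} to the generic fibre $X_{\eta}$ over $K=k(B)$, to the fibre $X_{pr_{B}(\theta)}$ over its residue field, and to the corresponding objects after base change to $k(\lambda)$. Its hypotheses hold: $\H^{0}(X_{\bar\eta},\G_{m})=\overline{K}^{\ast}$ because $X_{\eta}$ is proper and geometrically integral, and the injectivity of $\H^{3}(K,\G_{m})\to\H^{3}(X_{\eta},\G_{m})$ follows from the existence of a $0$-cycle of degree $1$ on $X_{\eta}$ (corestriction after restriction through a closed point is multiplication by its degree, so a degree-$1$ combination splits the map). The same degree-$1$ cycle spreads out over a non-empty open $U\subset B$ over which $f$ is smooth with geometrically integral fibres, so each fibre $X_{\theta_{0}}$ with $\theta_{0}\in U$ also carries a $0$-cycle of degree $1$ and the lemma applies to it. Hence each corner of the diagram sits in a four-term exact sequence, natural in the variety, whose outer terms are $\H^{1}(\cdot,\Pic)$, $(\Br\,\cdot\,)^{\Gal}$ and $\H^{2}(\cdot,\Pic)$ of the relevant geometric fibre. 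By naturality of these sequences and the five lemma, it suffices to prove that the specialization and base-change maps induce isomorphisms on each of these three Galois-cohomology groups.

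Next I would use the geometric hypotheses. Since $\H^{1}(X_{\bar\eta},\mathcal{O})=\H^{2}(X_{\bar\eta},\mathcal{O})=0$ and $\textup{NS}\,X_{\bar\eta}$ is torsion-free, $\Pic X_{\bar\eta}=\textup{NS}\,X_{\bar\eta}$ is a finitely generated torsion-free $\Z$-module and $\Br X_{\bar\eta}$ is finite; consequently the $\Gal(\overline{K}|K)$-actions on both factor through $\Gal(L_{0}|K)$ for a single finite Galois extension $L_{0}$ of $K=k(B)$, and by inflation-restriction (the lattice being torsion-free) $\H^{i}(K,\Pic X_{\bar\eta})=\H^{i}(\Gal(L_{0}|K),\Pic X_{\bar\eta})$. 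Viewing $L_{0}$ as the function field of a finite cover of $B$ étale over $U$, I would take $\Hil_{B}$ to be the generalized Hilbertian subset it defines, intersected with the locus over which the geometric Picard and Brauer modules specialize isomorphically. For $\theta_{0}\in\Hil_{B}$ the residual extension realizes the full group $\Gal(L_{0}|K)$, so specialization identifies the Galois modules $\Pic X_{\bar\eta}\cong\Pic X_{\bar\theta_{0}}$ and $\Br X_{\bar\eta}\cong\Br X_{\bar\theta_{0}}$ equivariantly, making the three cohomology maps — and hence the top horizontal map — isomorphisms. Applying Lemma \ref{lemHil} to $\Hil_{B}$ produces $\Hil_{\P^{1}}$ and $\Hil_{\lambda}$ with $pr_{B}(\Hil_{\lambda})\subset\Hil_{B}$, so the same $\Hil_{B}$ governs the bottom horizontal map over $B\times\lambda\simeq B_{k(\lambda)}$. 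For the vertical maps I would use that $\lambda\in\Hil_{\P^{1}}$ forces $k(\lambda)$ to be linearly disjoint from the algebraic closure $k'$ of $k$ in $L_{0}$, so that $L_{0}$ stays linearly disjoint from the base-changed function field and $\Gal(L_{0}|k(B))$ is unchanged; inflation-restriction then shows base change is an isomorphism on $\H^{i}(\cdot,\Pic)$ and on $(\Br\,\cdot\,)^{\Gal}$. Commutativity of the square is formal from functoriality of all constructions.

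The hard part will be the specialization isomorphism of the \emph{geometric} Picard (and Brauer) lattices: that for $\theta_{0}$ in the chosen Hilbertian subset the geometric Picard number does not jump and the lattice together with its Galois action is transported intact. This rests on the smooth proper family structure over $U$: proper-smooth base change renders the relevant $\ell$-adic realizations locally constant after a finite étale cover, while the vanishing $\H^{1}(\mathcal{O})=\H^{2}(\mathcal{O})=0$ together with torsion-freeness of $\textup{NS}$ keep $\Pic=\textup{NS}$ a constant torsion-free lattice and $\Br$ finite throughout the family. The generalized Hilbertian subset is exactly the device that makes Hilbert irreducibility compatible with these specializations, realizing the full monodromy/Galois image at $\theta_{0}$. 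Once this specialization statement is secured, the four maps of the diagram are simultaneously isomorphisms and the lemma follows.
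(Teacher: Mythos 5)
Your proposal follows essentially the same route as the paper's proof: the four-term exact sequence of Lemma \ref{shortexactseq} (applicable thanks to the degree-$1$ $0$-cycle, which spreads out to the closed fibres), reduction of the Galois action on $\Pic X_{\bar\eta}$ and $\Br X_{\bar\eta}$ to a finite quotient $\Gal(L_{0}|k(B))$, the generalized Hilbertian subsets coordinated by Lemma \ref{lemHil} so that the residual extension at $\theta$ realizes the full Galois group while the geometric $\Pic$ and $\Br$ specialize isomorphically, and a five-lemma chase \textemdash~ the only cosmetic difference being that the paper invokes \cite[Prop. 3.1.1]{Liang4} for the vertical base-change maps where you re-derive them from linear disjointness. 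The one imprecision is the claim $\H^{2}(k(B),\Pic X_{\bar\eta})=\H^{2}(\Gal(L_{0}|k(B)),\Pic X_{\bar\eta})$, which fails for $i=2$ since $\H^{2}(L_{0},\Z^{r})\neq 0$; as in the paper one instead enlarges $L_{0}$ so that the finite image of $(\Br X_{\bar\eta})^{\Gal}$ in $\H^{2}(k(B),\Pic X_{\bar\eta})$ lies in the subgroup $\H^{2}(L_{0}|k(B),\Pic X_{\bar\eta})$, which is all the diagram chase requires.
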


\begin{proof}
By applying \cite[Prop. 3.1.1]{Liang4}, which holds for any base field of characteristic $0$, to the generic fibre $X_{\eta},$ we can choose a finite extension $K'$ of $k(B)$ such that the left vertical homomorphism is an isomorphism once $k(\lambda)(B)$ is linearly disjoint from $K'$ over $k(B)$.
The field $K'$ is a finitely generated extension of $k$, let $k'$ be the algebraic closure of $k$ in $K'$. Any finite extension $l$ of $k$ linearly disjoint from $k'/k$ is then linearly disjoint from $K'/k$, and therefore $l(B)$ is linearly disjoint from $K'$ over $k(B).$ The finite \'etale morphism $\P^{1}\times_{\Spec(k)} \Spec(k')\to\P^{1}$ defines a generalized Hilbertian subset $\Hil_{1}$ of $\P^{1}$ such that for each $\lambda\in\Hil_{1}$ the left vertical homomorphism is an isomorphism.

It suffices to make sure that the two horizontal specializations are isomorphisms for $\theta$ and $\lambda$ in certain suitable generalized Hilbertian subsets that are going to be chosen. First we show this for $Sp_{X\to B,pr_{B}(\theta)}$, and for the lower horizontal homomorphism it is exactly the same argument applied for $X\times\lambda$ instead of $X$. To simplify notation, we replace $pr_{B}(\theta)\in B$ by $\theta\in B$ in (and only in) the next paragraph.

By the restriction-corestriction argument, the existence of a 0-cycle of degree $1$ on the generic fibre $X_{\eta}$ ensures that $\H^{3}(k(B),\mathbb{G}_{m})\to\H^{3}(X_{\eta},\mathbb{G}_{m})$ is injective. Then from the Leray spectral sequence
$$\E_{2}^{p,q}=\H^{p}(k(B),\H^{q}(X_{\bar{\eta}},\mathbb{G}_{m}))\Rightarrow\H^{p+q}(X_{\eta},\mathbb{G}_{m})$$ one deduces an exact sequence by Lemma \ref{shortexactseq},
$$0\to\H^{1}(k(B),\Pic X_{\bar{\eta}})\to\Br X_{\eta}/\Br k(B)\to (\Br X_{\bar{\eta}})^{\Gal_{\overline{k(B)}|k(B)}}\to\H^{2}(k(B),\Pic X_{\bar{\eta}}).$$ 
Take a finite Galois extension $L$ of $k(B)$ such that the generic fibre $X_{\eta}$ possesses a $L$-rational point, such that the absolute Galois group $\Gal_{\overline{L}|L}$  acts trivially on $\Pic X_{\bar{\eta}}$ and $\Br X_{\bar{\eta}}$ (both are finitely generated under the hypotheses on $X_{\bar{\eta}}$), and such that the image of $(\Br X_{\bar{\eta}})^{\Gal_{\overline{k(B)}|k(B)}}$ in $\H^{2}(k(B),\Pic X_{\bar{\eta}})$ lies inside the subgroup $\H^{2}(L|k(B),\Pic X_{\bar{\eta}})$. Then the exact sequence becomes an exact sequence of finite abelian groups
$$0\to\H^{1}(L|k(B),\Pic X_{\bar{\eta}})\to\Br X_{\eta}/\Br k(B)\to (\Br X_{\bar{\eta}})^{\Gal_{L|k(B)}}\to\H^{2}(L|k(B),\Pic X_{\bar{\eta}}).$$
We denote  $X_{\theta}\times_{\Spec(k(\theta))}\Spec(\overline{k(\theta)})$ by $X_{\bar{\theta}}$.
By taking an integral model over a certain sufficiently small non-empty open subset $U$ of $B$, one can specialise each term of the above sequence at each point $\theta\in U$ and one obtains a commutative diagram
\SelectTips{eu}{12}$$\xymatrix@C=10pt @R=20pt{
0\ar[r]&\H^{1}(L|k(B),\Pic X_{\bar{\eta}})\ar[r]\ar[d]&\Br X_{\eta}/\Br k(B)\ar[r]\ar[d]^{Sp_{X\to B,\theta}}&(\Br X_{\bar{\eta}})^{\Gal_{L|k(B)}}\ar[r]\ar[d]&\H^{2}(L|k(B),\Pic X_{\bar{\eta}})\ar[d]
\\0\ar[r]&\H^{1}(l|k(\theta),\Pic X_{\bar{\theta}})\ar[r]&\Br X_{\theta}/\Br k(\theta)\ar[r]&(\Br X_{\bar{\theta}})^{\Gal_{l|k(\theta)}}\ar[r]&\H^{2}(l|k(\theta),\Pic X_{\bar{\theta}})
},$$
where all the vertical maps are specializations at $\theta$.
Moreover, if $\theta$ belongs to the generalized Hilbertian subset $\Hil_{B}$ defined by the finite morphism whose generic fibre corresponds to the finite extension $L/k(B)$, then the specialization $\Spec(l)=\Spec(L)\times_{U}\theta$ of $\Spec(L)$ at $\theta$ is connected. This signifies that $l$ is a field and $\Gal_{L|k(B)}\simeq\Gal_{l|k(\theta)}$.  Replacing by smaller $U$ if necessary,  the specialization maps $\Pic X_{\bar{\eta}}\to \Pic X_{\bar{\theta}}$ and  $\Br X_{\bar{\eta}}\to \Br X_{\bar{\theta}}$ are both isomorphisms, \textit{cf.} \cite[Prop. 3.4.2]{Harari} and \cite[Prop. 2.1.1]{Harari2}. The same argument as for the generic fibre shows that the second line of the diagram is also exact. The isomorphism between Galois groups implies that all the three unnamed vertical maps in the above diagram are isomorphisms. By diagram chasing we prove that $Sp_{X\to B,\theta}$ is also an isomorphism.

In the previous paragraph we have proved that the specialization $Sp_{X\to B,pr_{B(\theta)}}$ is an isomorphism as long as  $pr_{B}(\theta)$ is contained in $\Hil_{B}$. By Lemma \ref{lemHil}, there exist a generalized Hilbertian subset $\Hil_{2}$ of $\P^{1}$ and $\Hil_{\lambda}\subset B\times\lambda$ such that for all $\lambda\in\Hil_{2}$ we have $pr_{B}(\Hil_{\lambda})\subset \Hil_{B}$, hence for such a $\lambda$ and $\theta\in\Hil_{\lambda}$ the specialization $Sp_{X\to B,pr_{B}(\theta)}$ is an isomorphism.
Moreover, by the same argument as in the previous paragraph applied to the $k(\lambda)$-variety $B\times\lambda$, by shrinking $\Hil_{\lambda}$ if necessary, we may also ensure that $Sp_{X\times\lambda\to B\times\lambda,\theta}$ is  an isomorphism when $\theta\in\Hil_{\lambda}$.

Now we take a  generalized Hilbertian subset $\Hil_{\P^{1}}\subset \Hil_{1}\cap\Hil_{2}$ of $\P^{1}$, it verifies the desired property.
\end{proof}

We are now ready to give the proof of  Theorem \ref{mainThm-section}. In \cite{Liang4}, to obtain results on $X$ the author passed from $X$ to $X\times\P^{1}$, here we extend this idea to a morphism $X\to B$.

\begin{proof}[Proof of Theorem \ref{mainThm-section}]
We will prove the exactness of $(\E)$, the assertion in the theorem for weak approximation is simultaneously proved. In fact, under the hypothesis 4, the exactness will be reduced to the forthcoming statement $(P'_{S})$ for $X'=X\times\P^{1}$. As $\CH_{0}(X'_{k_{v}})\to\CH_{0}(X'_{k_{v}})/n$ factors through $\CH_{0}(X'_{k_{v}})/2n$, the statement $(P'_{S})$ implies  the assertion  on weak approximation for 0-cycles on $X'$, and hence for $X$. For the assertion on the Hasse principle, one needs only take care of the existence on 0-cycles. 
In the forthcoming arguments proving $(P'_{S})$, if one ignores the part concerning images of 0-cycles in $\CH_{0}(X'_{k_{v}})/2n$, then one will get a proof for the assertion on the Hasse principle. For this purpose, one needs the hypothesis 2 on the Hasse principle (instead of on weak approximation) for 0-cycles on fibres.

Since $\Br X\to\Br X\times\P^1$ is an isomorphism, it suffices to prove the exactness of $(\E)$ for $X'=X\times\P^{1}$.
As $\deg:\CH_{0}(X\otimes\overline{k(X)})\to\Z$ is injective, there exists a non-zero integer $N$ such that for all extensions $F$ of $k$ the kernel $\ker\left[\deg:\CH_{0}(X_{F})\to\Z\right]$ and the cokernel  $\coker\left[\deg:\CH_{0}(X_{F})\to\Z\right]$ are annihilated by $N$. In fact, for the cokernel, the argument is standard using the projection formula; and for the kernel see the proof of \cite[Prop. 11]{CT05}. Note that $\Pic X_{\bar{k}}$ is torsion-free, we obtain $\H^{1}(X_{\bar{k}},\Q/\Z)=0$, hence $\H^{1}(X'_{\bar{\eta}}\otimes{\overline{k(X')}},\Q/\Z)=\H^{1}(X_{\overline{k(X')}},\Q/\Z)=0$ by the proper base theorem \cite[Cor. VI.2.6]{MilneEC}. According to \cite[Thm. 2.1]{Wittenberg}, we find that $\deg:\CH_{0}(X_{k_v})\to\Z$, which is identified with $\CH_{0}(X'_{k_v})\to\CH_{0}(\P^{1}_{k_{v}})$, is injective for almost all places $v$ of $k$. 

Consider the composition $pr_{\P^{1}}\circ F$ of
$F=f\times id: X'=X\times \P^{1}\to B'=B\times\P^{1}$ with $pr_{\P^{1}}: B'=B\times\P^{1}\to\P^{1}$. In order to prove the exactness of $(\E)$ for $X'$ we apply \cite[Prop. 3.1]{Wittenberg} (see also \cite[\S 3.5]{Liang4} for a simplified argument for this special case: a fibration over $\P^1$), it suffices to verify the following property for all finite subset $S\subset \Omega_{k}$ of places of $k$.
\begin{enumerate}
\item[$(P_S)$]
Let $\{z_v\}_{v\in\Omega_k}$ be a family of 0-cycles of degree $\delta$ on $X'$. If it is orthogonal to $\Br X'$, then for all integer $n>0$, there exists a 0-cycle $z$ of $X'$ of degree $\delta$, such that for all $v\in S$ we have $z=z_v$ in $\CH_0(X'_{k_v})/n$ if $v$ is non-archimedean and $z=z_v+N_{\bar{k}_v/k_v}(u_v)$ in $\CH_0(X'_{k_v})$ for a certain $u_v\in \CH_0({X'_{\bar{k}_v}})$ if $v$ is real.
\end{enumerate}
The property $(P_S)$ is implied by the following property, since for a real place  $\CH_{0}(X'_{k_{v}})\to\CH_{0}(X'_{k_{v}})/N_{\bar{k}_{v}|k_{v}}\CH_{0}(X'_{\bar{k}_{v}})$ factors through $\CH_{0}(X'_{k_{v}})/2n$.
\begin{enumerate}
\item[$(P'_S)$]
Let $\{z_v\}_{v\in\Omega_k}$ be a family of 0-cycles of degree $\delta$ on $X'$. If it is orthogonal to $\Br X'$, then for all integer $n>0$, there exists a 0-cycle $z$ of $X'$ of degree $\delta$, such that for all $v\in S$ we have $z=z_v$ in $\CH_0(X'_{k_v})/2n.$
\end{enumerate}

We are going to prove $(P'_S)$ for arbitrary degree $\delta$.

Note that $B$ is geometrically integral, by enlarging $S$ if necessary, according to Lang\textendash Weil estimation and Hensel's lemma, we may assume that $B$ has $k_{v}$-rational points for all $v\in \Omega_{k}\setminus S$.
Under the geometric hypothesis on $B$ the group $\Br B_{\bar{k}}$ is finite and $\Pic B_{\bar{k}}$ is torsion-free, we deduce the finiteness of $\Br B/\Br k$ via the Leray spectral sequence. We fix a finite set of representatives $\Gamma_B\subset \Br B\simeq\Br B'$.
By the argument of good reduction, we may also assume that the finite set $S$ is large enough such that local evaluations of elements $b_{B}\in\Gamma_B$ are all $0$ for $v\notin S$.

Similarly, the quotient $\Br X_{\eta}/\Br k(B)$ is finite, we choose a finite set $\Gamma_{X}\subset\Br X_{\eta} \subset\Br k(X)$ of representatives. By hypothesis, let $z_{0}$ be a 0-cycle of degree $1$ on $X_{\eta}$. We write $z_{0}=\sum_{j}n_{j}R_{j}(\eta)$  where $R_{j}(\eta)$ is a closed point of $X_{\eta}$ of residual field $K_{j}$. Denote by $d_{j}$ the degree $[K_{j}:k(B)]$. There exist a nonempty open subset $U$ of $B$ and an \'etale morphism $Z_{j}\to U$ of degree $d_{j}$ for each $j$, where $Z_{j}$ is an integral closed subvariety  of $X\times_{B}U$ of function field $K_{j}$. We pose $b_{X}^{0}=\sum_{j}n_{j}\cores_{K_{j}|k(B)}(b_{X}(R_{j}(\eta)))\in\Br k(B)$. Replacing $b_{X}$ by $b_{X}-b^{0}_{X}$ if necessary, we may assume that
$$\sum_{j}n_{j}\cores_{K_{j}|k(B)}(b_{X}(R_{j}(\eta)))=0\in\Br k(B).\leqno(\star)$$
 Let $X_{0}$ be a nonempty open subset of $X$ such that $\Gamma_{X}\subset\Br X_{0}\subset\Br k(X)$. By shrinking $U$ and $X_{0}$ if necessary, we may assume that $f(X_{0})=U$.

By augmenting  $L$ if necessary, according to \cite[Prop. 3.1.1]{Liang4} we may assume that for any $K\in\K_{L}$ the base extension $\Br B/\Br k\to\Br B_{K}/\Br K$ is an isomorphism. The finite \'etale morphism $\P^{1}_{L}\to\P^{1}$ defines a generalized Hilbertian subset $\Hil_{0}$ of $\P^{1}$. As long as $\lambda\in\Hil_{0}$, the homomorphism $\Br B/\Br k\to\Br B\times\lambda/\Br k(\lambda)$ is an isomorphism and $\Hil_{k(\lambda)}\subset B_{k(\lambda)}$ is defined as in the hypothesis.

We begin with a family $\{z_v\}_{v\in\Omega_k}$ of 0-cycles of degree $\delta$ orthogonal to $\Br X'$. We fix an (even) integer $n$, and we are looking for a global 0-cycle $z$ of degree $\delta$ having the same image in $\CH_{0}(X'_{k_v})/n$ for $v\in S$.
After Lemma \ref{movinglemma0}, we may suppose that the 0-cycles $z_{v}$ are all supported in $X_{0}\times\P^{1}$.
Firstly, we have for any $b_{B}\in \Gamma_B$
$$\sum_{v\in\Omega_{k}}\inv_{v}(\langle z_{v},F^{*}(b_{B})\rangle_{v})=0,$$ then
$$\sum_{v\in S}\inv_{v}(\langle z_{v},F^{*}(b_{B})\rangle_{v})=0.$$
According to Harari's formal lemma (Lemma \ref{formal-lemma}), by augmenting $S$ (which will never change the above equality because by the choice of $S$ the evaluations at newly added places are always $0$) we have
$$\sum_{v\in S}\inv_{v}(\langle z_{v},b_{X}\rangle_{v})=0$$ for all $b_{X}\in\Gamma_{X}$.

Let $m$ be a positive integer which annihilates all elements in $\Gamma_B$ and $\Gamma_{X}$. Fix a closed point $P$ of $X'$, denote by $\delta_{P}=[k(P):k]$ the degree of $P$.

For each $v\in S$, we write $z_{v}=z_{v}^{+}-z_{v}^{-}$ where $z_{v}^{+}$ and $z_{v}^{-}$ are effective 0-cycles with disjoint supports.
We pose $z_{v}^{1}=z_{v}+mn\delta_{P}z_{v}^{-}=z_{v}^{+}+(mn\delta_{P}-1)z_{v}^{-}$, then $\deg(z_{v}^{1})\equiv\delta\mod{mn\delta_{P}}$  and they are all effective 0-cycles. We add to each $z_{v}^{1}$  a suitable multiple of de 0-cycle $mnP_{v}$ where $P_{v}=P\times_{\Spec(k)}\Spec(k_{v})$ and we obtain $z_{v}^{2}$ of the same degree $\Delta\equiv\delta\mod{mn\delta_{P}}$ for all $v\in S$. According to the choice of $m$, we have $\langle z_{v}^{2}, F^{*}(b)\rangle_{v}=\langle z_{v}, F^{*}(b)\rangle_{v}$ for any $v\in S$ and $b\in\Gamma_B$. We apply Lemma \ref{moving lemma} to $pr_{\P^{1}}\circ F: X'\to\P^{1}$ and obtain an effective 0-cycle $z_{v}^{3}$ sufficiently close to $z_{v}^{2}$ such that $(pr_{\P^{1}}\circ F)_{*}(z_{v}^{3})$ is separable. A fortiori, $F_{*}(z_{v}^{3})$ are also separable. By the continuity of the local evaluation, we have $\langle z_{v}^{3},F^{*}(b)\rangle_{v}=\langle z_{v}, F^{*}(b)\rangle_{v}$. We also check that $z_{v}$, $z_{v}^{1}$, $z_{v}^{2}$ and $z_{v}^{3}$ have the same image in $\CH_{0}(X'_{k_v})/n$.

We choose a rational point $\infty\in\P^{1}(k)$ outside the supports of $(pr_{\P^{1}}\circ F)_{*}(z^{3}_{v})$ for all $v\in S$.
We choose generalized Hilbertian subsets $\Hil_{\P^{1}}\subset\P^{1}$ and $\Hil_{\lambda}\subset B_{k(\lambda)}$ for $\lambda\in\Hil_{\P^{1}}$ such that the property in Lemma \ref{lemBr} is satisfied. Without lost of generality we may assume that $\Hil_{\P^{1}}\subset\Hil_{0}\cap(\P^{1}\setminus\infty)$ and $\Hil_{\lambda}\subset\Hil_{k(\lambda)}\cap U_{k(\lambda)}$ if $\lambda\in\Hil_{\P^{1}}$.

By Hilbert's irreducibility for 0-cycles (Lemma \ref{Hilbert_irred}) applied to $pr_{\P^{1}}\circ F:X'\to\P^{1}$, we find a closed point $\lambda\in\Hil_{\P^{1}}$ such that $\lambda$ is sufficiently close to $(pr_{\P^{1}}\circ F)_{*}(z_{v}^{3})$. More precisely, we write
$\lambda_v=\lambda\times_{\mathbb{P}^1}{\mathbb{P}_{k_{v}}^1}=\bigsqcup_{w\mid v,w\in\Omega_{k(\lambda)}}\Spec(k(\lambda)_w)$
for $v\in\Omega_k,$ the image of $\lambda$ in $Z_0(\mathbb{P}^1_{k_{v}})$ is written as
$\lambda_v=\sum_{w\mid v,w\in\Omega_{k(\lambda)}}P_w$ where $P_w=\Spec(k(\lambda)_w)$ is a closed
point of ${\mathbb{P}^1_{k_{v}}}$ of residual field $k(\lambda)_w.$
For each $v\in S,$ the 0-cycle $\lambda_v$ is sufficiently close to $(pr_{\P^{1}}\circ F)_*(z^3_v),$ where
the effective separable 0-cycle $(pr_{\P^{1}}\circ F)_*(z^3_v)$ is written as $\sum_{w\mid v,w\in\Omega_{k(\lambda)}}Q_w$ with
distinct $Q_w$'s.
Then  $k(\lambda)_w=k_v(P_w)=k_v(Q_w),$ and $P_w$ is sufficiently close to $Q_w\in \mathbb{P}^1_{k_{v}}(k(\lambda)_w).$
And we know that
$z^3_v$ is written as $\sum_{w\mid v,w\in\Omega_{k(\lambda)}}M^0_w$ with $k_v(M^0_w)=k(\lambda)_w$ and
$M^0_w\in X'_{k_v}(k(\lambda)_w)$ is situated
on the fiber of $pr_{\P^{1}}\circ F$ at the closed point $Q_w.$
The implicit function theorem implies that there exists a smooth $k(\lambda)_w$-point $M_w$ on the fiber
$(pr_{\P^{1}}\circ F)^{-1}(\lambda)$ sufficiently close
to $M^0_w$ for every $w\in S\otimes_kk(\lambda).$ Then the closed point $M_w$ and $M_w^0$ have the same
image in $CH_0(X_{k_v})/n$.

As in the proof of Theorem \ref{mainThm-geo-int}, by the continuity of the local evaluation, for any $b_{B}\in \Gamma_{B}$ we obtain on $B'$ the equality
$$\sum_{w\in S\otimes_kk(\lambda)}\inv_w( \langle F(M_w),b_{B} \rangle_{k(\lambda)_w})=0.$$
And similarly, on $X'$,  for $b_{X}\in\Gamma_{X}$
$$\sum_{w\in S\otimes_kk(\lambda)}\inv_w( \langle M_w,b_{X} \rangle_{k(\lambda)_w})=0.$$

On the $k(\lambda)$-variety $pr_{\P^{1}}^{-1}(\lambda)=B\times\lambda\simeq B_{k(\lambda)}$, we fix a $k(\lambda)_{w}$-rational point $N_{w}$ for each $w\in\Omega_{k(\lambda)}\setminus S\otimes_{k}k(\lambda)$, this is possible by the choice of $S$. We set $N_{w}=F(M_{w})$ for $w\in S\otimes_{k}k(\lambda)$. Then we have the equality for all $b_{B}\in\Gamma_{B}$
$$\sum_{w\in \Omega_{k(\lambda)}}\inv_w( \langle N_w,b_{B} \rangle_{k(\lambda)_w})=0.$$

By abuse of notation, here $b_{B}$ denotes also its image under the restriction $\Br B'\to\Br B\times\lambda$. Then by functoriality,  the above equality is viewed as the Brauer\textendash Manin pairing on $B\times \lambda$. Recall that $\lambda\in\Hil_{\P^{1}}\subset\Hil_{0}$, the base extension $\Br B/\Br k\to\Br B\times\lambda/\Br k(\lambda)$ is an isomorphism. Therefore $\{N_{w}\}_{w\in\Omega_{k(\lambda)}}$ is orthogonal to the Brauer group of $B\times\lambda$.

By hypothesis, weak approximation gives us a $k(\lambda)$-rational point $\theta$ on $B\times\lambda$ sufficiently close to $N_{w}=F(M_{w})$ for all $w\in S\otimes_{k}k(\lambda)$,
we can also require that the fibre $F^{-1}(\theta)$ is smooth.  Moreover, by Lemma \ref{Hilbert-for-rational-points} we can choose such a rational point $\theta$ contained in the (classic) Hilbertian subset $\Hil_{\lambda}\cap(B\times\lambda)(k(\lambda)).$
By implicit function theorem $F^{-1}(\theta)$ has $k(\lambda)$-points $M_{w}^{\theta}$ sufficiently close to $M_{w}$ for $w\in S\otimes_{k}k(\lambda)$. By the continuity of the Brauer\textendash Manin pairing, we have
$$\sum_{w\in S\otimes_kk(\theta)}\inv_w( \langle M^{\theta}_w,b_{X} \rangle_{k(\theta)_w})=0$$
for all $b_{X}\in\Gamma_{X}$.
We consider the specialization $R_{j}(\theta)=(Z_{j})_{k(\lambda)}\times_{U_{k(\lambda)}}\theta=Z_{j}\times_{U}\theta$ of $R_{j}(\eta)$  at the $k(\lambda)$-rational point $\theta$ of $B\times\lambda\simeq B_{k(\lambda)}$. Then $\sum_{j}n_{j}R_{j}(\theta)$ is a global 0-cycle of degree $1$ on the fibre $F^{-1}(\theta)=(B\times\lambda)_{\theta}$. The equality $(\star)$ implies that
$$\sum_{j}n_{j}\cores_{k(R_{j}(\theta))|k(\theta)}(b_{X}(R_{j}(\theta)))=0\in\Br k(\theta).$$
For $w\in \Omega_{k(\lambda)}\setminus S\otimes_{k}k(\lambda)$,  we set $M_{w}^{\theta}$ to be the local 0-cycle of degree $1$
$$\sum_{j}n_{j}R_{j}(\theta)\times_{\Spec(k(\theta))}\Spec(k(\theta)_{w})$$
then $\inv_w( \langle M^{\theta}_w,b_{X} \rangle_{k(\theta)_w})=0$, and finally we obtain
$$\sum_{w\in \Omega_{k(\theta)}}\inv_w( \langle M^{\theta}_w,b_{X} \rangle_{k(\theta)_w})=0.$$
By functoriality of the Brauer\textendash Manin pairing, it can be viewed as an equality on the $k(\theta)$-variety $F^{-1}(\theta)$.
Since $\theta\in\Hil_{\lambda}$, by comparison of Brauer groups (Lemma \ref{lemBr}) $\Gamma_{X}$ maps surjectively onto $\Br F^{-1}(\theta)/\Br k(\theta)$, and by hypothesis there exists a global 0-cycle $z'$ of degree $1$ on the $k(\theta)$-variety $F^{-1}(\theta)$ having the same image as $M_{w}^{\theta}$ in $\CH_{0}(F^{-1}(\theta)_{{w}})/n$ for all $w\in S\otimes_{k}k(\theta)$. Viewed as a 0-cycle of $X'$, the 0-cycle $z'$ is of degree $\Delta\equiv\delta\mod mn\delta_{P}$, by subtracting  a suitable multiple of  $P$, we obtain a 0-cycle $z$ of degree $\delta$. We verify that $z$ and $z_{v}$ have the same image in  $\CH_{0}(X'_{k_v})/n$ for all $v\in S$ which terminates the proof.
\end{proof}

\begin{proof}[Sketch of the proof of Theorem \ref{mainThm-geo-int}]
By Remark \ref{rmksplitness}, we can replace the hypothesis 1 by $1'$. The proof is then very similar to the proof of Theorem \ref{mainThm-section}. In Theorem \ref{mainThm-section}, we suppose the hypothesis 1.2: the generic fibre $X_{\eta}$ possesses a 0-cycle of degree $1$, which is used in two purposes. Firstly, as in Remark \ref{rmksplitness}, this condition implies hypothesis $1'$, which ensures the existence of local 0-cycles of degree $1$ for almost all places on  a selected closed fibre. Secondly, this condition also allows us to compare the Brauer group of the selected closed fibre with the Brauer group of the generic fibre. The hypothesis 1.1  also serves for this second purpose. But in  Theorem \ref{mainThm-geo-int}, closed fibres are supposed to satisfy weak approximation, we do not need any more such a comparison of  Brauer groups. Hence the same proof (with such a comparison ignored) fits for  Theorem \ref{mainThm-geo-int}. Moreover, since we do not need the orthogonality of  local 0-cycles to elements of the Brauer group  of the generic fibre, we can even simplify the proof without the application of Harari's formal lemma \ref{formal-lemma}.
\end{proof}

\small


\bibliographystyle{alpha}
\bibliography{mybib1}
\end{document}